\newtheorem{theorem}{Theorem}[section]
\newtheorem{lemma}[theorem]{Lemma}
\newtheorem{proposition}[theorem]{Proposition}
\newtheorem{definition}[theorem]{Definition}
\newtheorem{example}[theorem]{Example}
\newtheorem{remark}[theorem]{Remark}
\def\bea{\begin{eqnarray*}}
\def\eea{\end{eqnarray*}}
\newcommand{\ra}{\rightarrow}
\begin{document}

\sloppy

\title[]{Structural matrix algebras, generalized flags and gradings}

\subjclass[2010]{16W50, 16W20, 16S50, 06A06}

\keywords{structural matrix algebra, preorder relation, flag,
group grading, automorphism group.}

\begin{abstract}
We show that a structural matrix algebra $A$ is isomorphic to the
endomorphism algebra of an algebraic-combinatorial object called a
generalized flag. If the flag is equipped with a group grading, an
algebra grading is induced on $A$. We classify the gradings
obtained in this way as the orbits of the action of a double
semidirect product on a certain set. Under some conditions on the
associated graph, all good gradings on $A$ are of this type. As a
bi-product, we obtain a new approach to compute the automorphism
group of a structural matrix algebra.
\end{abstract}

\author{F. Be\c{s}leag\u{a} and S. D\u{a}sc\u{a}lescu}
\address{University of Bucharest, Facultatea de Matematica\\ Str.
Academiei 14, Bucharest 1, RO-010014, Romania}

\address{e-mail: filoteia\_besleaga@yahoo.com, sdascal@fmi.unibuc.ro}

\date{}

\maketitle

\section{Introduction and preliminaries}

Flags and flag varieties  play a key role in Algebraic Geometry,
Representation Theory, Algebraic Groups and Combinatorics, see
\cite{lb}. In this paper we consider a more general concept of
flag and we give some applications. This new kind of flag arises
as follows.

Let $k$ be a field. A structural matrix algebra is a subalgebra of
a full matrix algebra over $k$, consisting of all matrices with
zero entries on certain prescribed positions, and allowing
anything on the remaining positions. An important number of
examples and counterexamples  use such algebras. These algebras
were called structural matrix algebras in \cite{vW}, but they had
already been considered in \cite{nowicki}. Particular examples of
structural matrix algebras are upper triangular matrix algebras,
and more generally, upper block triangular matrix algebras, which
are of fundamental importance in Linear Algebra, the study of
linear groups, the study of numerical invariants of PI algebras,
etc. A structural matrix subalgebra $A$ of $M_n(k)$ is associated
with a preorder relation (i.e. reflexive and transitive) $\rho$ on
the set $\{ 1,\ldots, n\}$; $A$ consists of all matrices
$(a_{ij})_{1\leq i,j\leq n}$ such that $a_{ij}=0$ whenever
$(i,j)\notin \rho$. We denote $A=M(\rho, k)$; in other
terminology, this is the incidence algebra over $k$ associated
with $\rho$, see \cite{spiegel}. Automorphisms of structural
matrix algebras are of interest, and the problem to understand
them gets more complicated if we take $k$ to be just a ring. If
$k$ is a field, the description of the automorphism group of
$M(\rho,k)$ was done in \cite{coelho}.

A general problem in Ring Theory is to describe and classify all
group gradings on various matrix algebras. In the case of full
matrix algebras over a field $k$, the problem was solved in
\cite{bsz} and \cite{bz} for algebraically closed $k$, and descent
theory was used in \cite{cdn} to approach the case of an arbitrary
$k$. For subalgebras of a full matrix algebra, in particular for
structural matrix algebras, it is much more complicated to
describe all gradings. The aim of our paper is to construct and
classify a certain class of gradings on structural matrix
algebras.

We show in Section \ref{structuralendo} that in the same way the
full matrix algebra $M_n(k)$ is the endomorphism group of a vector
space of dimension $n$, a structural matrix algebra $M(\rho,k)$ is
isomorphic to the endomorphism algebra of a certain
algebraic-combinatorial structure $\mathcal F$, which we call a
$\rho$-flag. In Section \ref{sectiongradedflags} we explain that
if $\mathcal F$ is additionally equipped with a $G$-grading, where
$G$ is a group, then its endomorphism algebra ${\rm End}(\mathcal
F)$ gets an induced $G$-graded algebra structure; we denote by
${\rm END}(\mathcal F)$ the obtained $G$-graded algebra. This
grading transfers to a $G$-grading on $M(\rho,k)$ via the
isomorphism mentioned above. The gradings produced in this way on
$M(\rho,k)$ are good gradings, i.e. all the matrix units present
in  $M(\rho,k)$ are homogeneous elements. It is an interesting
question whether all good gradings are obtained like this. This is
a problem of independent interest, and it can be formulated in
simple terms related to the graph $\Gamma$ associated with $\rho$:
if $G$ is a group, and on each arrow of $\Gamma$ we write an
element of $G$ as a label, such that for any two paths starting
from and terminating at the same points the product of the labels
of the arrows is the same for both paths, does the set of labels
arise from a set of weights on the vertices of $\Gamma$, in the
sense that an arrow starting from $v_1$ and terminating at $v_2$
has label $g_1g_2^{-1}$, where $g_1$ and $g_2$ are the weights of
$v_1$ and $v_2$? This problem was considered in \cite{nowicki} in
the case where $G$ is abelian, and it was showed that the answer
is positive if and only if the cohomology group $H^1(\Delta,
G)=0$, where $\Delta$ is a certain simplicial complex associated
with $\rho$. Also, for a given $\rho$, the answer to the above
question is positive for any abelian group $G$ if and only if the
homology group $H_1(\Delta)=0$. We show that the answer is
positive for any arbitrary group $G$ if and only if the normal
closure of two certain subgroups $A(\Gamma)\subseteq B(\Gamma)$ of
the free group generated by the arrows of  $\Gamma$ coincide;
$A(\Gamma)$ and $B(\Gamma)$ are defined in terms of cycles of the
un-directed graph obtained from $\Gamma$. This parallels the
result in the abelian case, where $H_1(\Delta)=B/A$ for similar
subgroups $A$ and $B$ in a free abelian group associated with
$\Gamma$. In fact we use slightly different $A$ and $B$, by
working with a different graph.

In order to classify $G$-gradings on the structural matrix algebra
$M(\rho,k)$, we first look at the isomorphisms between the
algebras ${\rm End}(\mathcal F)$ and ${\rm End}(\mathcal F')$,
where $\mathcal F$ and $\mathcal F'$ are $\rho$-flags under the
vector spaces $V$ and $V'$. An equivalence relation $\sim$  on $\{
1,\ldots, n\}$ arises from $\rho$, where $i\sim j$ whenever $i\rho
j$ and $j\rho i$. Then $\rho$ induces a partial order on the set
$\mathcal C$ of equivalence classes with respect to $\sim$. We
show in Section \ref{sectionlattice} that the ${\rm End}({\mathcal
F})$-submodules of $V$ are in a bijective correspondence with the
antichains of $\mathcal C$; let $\mathcal A(C)$ be the lattice
structure on the set of all such antichains, induced via this
bijection. Then an algebra isomorphism $\varphi:{\rm End}(\mathcal
F)\ra {\rm End}(\mathcal F')$ induces a linear isomorphism
$\gamma:V\ra V'$ which is a $\varphi'$-isomorphism for a certain
deformation of $\varphi$. The new algebra isomorphism $\varphi'$
is obtained from $\varphi$ by using a transitive function on
$\rho$ with values in $k^*$. Since $\varphi'$ is an algebra
isomorphism, $\gamma$ induces an isomorphism between the lattices
of $ {\rm End}(\mathcal F)$-submodules of $V$ and of ${\rm
End}(\mathcal F')$-submodules of $V'$, and this lattice
isomorphism reduces in fact to an automorphism of the lattice
$\mathcal A(C)$. Such an automorphism is completely determined by
an automorphism $g$ of the poset $\mathcal C$. Moreover, we
explain that $\varphi$ can be recovered from $g$, the deformation
constants producing $\varphi'$ from $\varphi$, and a matrix of
$\gamma$ in a fixed pair of bases. Thus we obtain that the set of
algebra isomorphisms from $ {\rm End}(\mathcal F)$ to ${\rm
End}(\mathcal F')$ is in a bijective correspondence with the
equivalence classes of a set involving the invertible matrices of
$M(\rho, k)$, the automorphisms of $\mathcal{C}$ preserving the
cardinality of elements, and the transitive functions on $\rho$,
with respect to an equivalence relation. In particular, if
$\mathcal F'=\mathcal F$, the automorphism group of $ {\rm
End}(\mathcal F)$ is described as a factor group of a double
semidirect product. As a bi-product, we obtain a descriptive
presentation of the automorphism group of a structural matrix
algebra. This automorphism group was computed in \cite{coelho},
and we show how the presentation in \cite{coelho} can be derived
from ours.

 For classifying $G$-gradings arising from graded
flags, we consider two $G$-graded $\rho$-flags $\mathcal F$ and
$\mathcal F'$, and we look at the isomorphisms between the graded
algebras ${\rm END}(\mathcal F)$ and ${\rm END}(\mathcal F')$.
Using the structure of isomorphisms between ${\rm End}(\mathcal
F)$ and ${\rm End}(\mathcal F')$, which we already know by now,
and adding the additional information about gradings, we obtain in
Section \ref{sectionisograded} that ${\rm END}(\mathcal F)\simeq
{\rm END}(\mathcal F')$ if and only if the connected components of
$\mathcal F$ and $\mathcal F'$ are pairwise isomorphic up to a
permutation, some graded shifts and an automorphism of $\mathcal
C$. Using this result, we show in Section \ref{sectionorbits} that
the isomorphism types of graded algebras of the form ${\rm {\rm
END}}(\mathcal F)$ are classified by the orbits of the action of a
certain group, which is a double semidirect product of a Young
subgroup of $S_n$, a certain subgroup of automorphisms of
$\mathcal C$, and $G^q$, where $q$ is the number of connected
components of $\mathcal C$, on the set $G^n$.

We use the standard terminology on gradings, see for example
\cite{nvo}.

\section{Structural matrix algebras as endomorphism  algebras}
\label{structuralendo}

Let $k$ be a field, $n$ a positive integer and $\rho$ a preorder
relation on $\{ 1,\ldots,n\}$. Let $M(\rho,k)$ be the
  structural matrix algebra
associated with $\rho$.

Let $\sim$ be the equivalence relation on $\{ 1,\ldots ,n\}$
associated with $\rho$, i.e. $i \sim j$ if and only if $i  \rho j$
and $j  \rho  i$, and let  $\mathcal{C}$  be the set of
equivalence classes. Then $\rho$ induces a partial order $\leq$ on
$\mathcal C$  defined by $\hat{i} \leq \hat{j} $ if and only if $i
\rho  j$, where $\hat{i}$ denotes the equivalence class of $i$.

For any $\alpha \in \mathcal{C}$, let $m_{\alpha}$ be the number
of elements of $\alpha$.

\begin{definition} \label{de}
A $\rho$-flag is an $n$-dimensional vector space $V$ with a family
$(V_{\alpha})_{\alpha \in \mathcal{C}}$ of subspaces such that
there is a basis $B$ of $V$ and  a partition $B=\displaystyle
\bigcup_{\alpha \in \mathcal{C}} B_{\alpha}$ with the property
that $|B_\alpha|=m_\alpha$ and $\displaystyle\bigcup_{\beta \leq
\alpha} B_{\beta}$ is a basis of $V_{\alpha}$ for any $\alpha \in
\mathcal{C}$. If $\mathcal{F}=(V,( V_{\alpha})_{\alpha \in
\mathcal{C}})$ and $\mathcal{F}'=(V',( V'_{\alpha})_{\alpha \in
\mathcal{C}})$ are $\rho$-flags, then a morphism of $\rho$-flags
from $\mathcal{F}$ to $\mathcal{F}'$ is a linear map $f:V \ra V'$
such that $f(V_{\alpha}) \subset V'_{\alpha}$ for any $\alpha \in
\mathcal{C}$.

\end{definition}

If $\rho$ is just the usual ordering relation on $\{ 1, \dots , n
\} $ (which corresponds to $M(\rho,k)$ being  the algebra of upper
triangular matrices), then a $\rho$-flag is just an usual flag on
an $n$-dimensional vector space.

More generally, if $\rho$ is such that $\mathcal{C}=\{
\alpha_1,\ldots ,\alpha_r\}$ is totally ordered, say
$\alpha_1<\ldots <\alpha_r$, and $|\alpha_i|=m_i$ for any $1\leq
i\leq r$, then a $\rho$-flag is a flag of signature $(m_1,\ldots
,m_r)$, and $M(\rho ,k)$ is the algebra
$$\left(
\begin{array}{cccc}
M_{m_1}(k)&M_{m_1,m_2}(k)&\ldots&M_{m_1,m_r}(k)\\
0&M_{m_2}(k)&\ldots&M_{m_2,m_r}(k)\\
\ldots&\ldots&\ldots&\ldots\\
0&0&\ldots&M_{m_r}(k)
\end{array}
\right)$$  of upper block triangular matrices, with diagonal
blocks of size $m_1,\ldots ,m_r$.

For any $i,j \in \{ 1, \dots , n \}$ with $i  \rho  j$, let
$e_{ij}$ be the matrix in $M_n(k)$ having $1$ on the
$(i,j)$-position and $0$ elsewhere. The set of all such $e_{ij}$'s
is a basis of $M(\rho,k)$. Now we present $M(\rho,k)$ as an
algebra of endomorphisms.

\begin{proposition}  \label{isoEndMat}
Let $\mathcal{F}=(V,(V_{\alpha})_{\alpha \in \mathcal{C}})$ be a
$\rho$-flag. Then the algebra ${\rm End}(\mathcal{F})$ of
endomorphisms of $\mathcal{F}$ (with the map composition as
multiplication) is isomorphic to $M(\rho,k)$.
\end{proposition}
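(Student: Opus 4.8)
The plan is to construct an explicit algebra isomorphism $\Phi : M(\rho,k) \to \mathrm{End}(\mathcal{F})$ by sending each matrix unit $e_{ij}$ (for $i\rho j$) to a suitable endomorphism of $\mathcal{F}$, then to check multiplicativity, linearity, bijectivity, and—crucially—that the image actually lands in $\mathrm{End}(\mathcal{F})$, i.e. respects the subspaces $V_\alpha$.

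First I would fix the basis $B$ of $V$ afforded by Definition \ref{de}, with its partition $B = \bigcup_{\alpha\in\mathcal{C}} B_\alpha$, and label the basis vectors so that $B = \{v_1,\dots,v_n\}$ with $v_\ell \in B_{\hat\ell}$; that is, the basis vector $v_\ell$ sits in the block indexed by the $\sim$-class of $\ell$. For $i\rho j$, define $f_{ij} : V \to V$ to be the linear map with $f_{ij}(v_j) = v_i$ and $f_{ij}(v_\ell) = 0$ for $\ell \neq j$—the "matrix unit" endomorphism in this basis. The key verification is that $f_{ij} \in \mathrm{End}(\mathcal{F})$, i.e. $f_{ij}(V_\alpha) \subseteq V_\alpha$ for every $\alpha$: since $V_\alpha$ has basis $\bigcup_{\beta\leq\alpha} B_\beta$, we need that whenever $v_j$ appears in this basis (i.e. $\hat\jmath \leq \alpha$) then so does $v_i$ (i.e. $\hat\imath \leq \alpha$); but $i\rho j$ gives $\hat\imath \leq \hat\jmath$ in $\mathcal{C}$, and $\hat\jmath\leq\alpha$ then yields $\hat\imath\leq\alpha$ by transitivity of the partial order. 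So $f_{ij}$ sends each basis vector of $V_\alpha$ either to $0$ or to another basis vector of $V_\alpha$, hence stabilizes $V_\alpha$. Then set $\Phi(e_{ij}) = f_{ij}$ and extend linearly.

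Next I would check that $\Phi$ is an algebra homomorphism: the relations $f_{ij}f_{kl} = \delta_{jk} f_{il}$ mirror exactly the multiplication rule $e_{ij}e_{kl} = \delta_{jk}e_{il}$ of matrix units (one must note that if $i\rho j$ and $j\rho l$ then $i\rho l$ by transitivity of $\rho$, so $e_{il}$ really is a basis element of $M(\rho,k)$ and the product stays inside the algebra), and $\Phi$ sends the identity matrix $\sum_{i} e_{ii}$ to $\sum_i f_{ii} = \mathrm{id}_V$. Injectivity is immediate since $\Phi$ maps the basis $\{e_{ij} : i\rho j\}$ of $M(\rho,k)$ to the linearly independent set $\{f_{ij} : i\rho j\}$ of endomorphisms (they act on distinct basis vectors with distinct images).

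The remaining, and really the only substantive, point is surjectivity: given any $f \in \mathrm{End}(\mathcal{F})$, I must show $f$ is a $k$-linear combination of the $f_{ij}$ with $i\rho j$, equivalently that in the basis $B$ the matrix of $f$ has entry $0$ in position $(i,j)$ whenever $(i,j)\notin\rho$. Suppose $(i,j)\notin\rho$, so $\hat\imath \not\leq \hat\jmath$; then choosing $\alpha = \hat\jmath$ we have $v_j \in V_{\hat\jmath}$, so $f(v_j) \in V_{\hat\jmath}$, which has basis $\bigcup_{\beta\leq\hat\jmath} B_\beta$; since $\hat\imath \not\leq \hat\jmath$, the vector $v_i$ does not occur in this basis, so the coefficient of $v_i$ in $f(v_j)$ is zero, as required. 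Hence $f = \sum_{i\rho j} c_{ij} f_{ij}$ and $\Phi$ is onto. This surjectivity step—translating "$f$ preserves all the $V_\alpha$" into "$f$ is supported on $\rho$"—is the heart of the proof; everything else is the routine bookkeeping of matrix units, and I would state it briefly rather than belabor it.
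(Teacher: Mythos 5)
Your proof is correct and follows essentially the same route as the paper: both fix a basis indexed so that $v_\ell$ lies in the block of $\hat\ell$, define the matrix-unit endomorphisms ($E_{ij}$ in the paper, your $f_{ij}$) for $i\rho j$, verify the relations $E_{ij}E_{pq}=\delta_{jp}E_{iq}$, and match them with the $e_{ij}$ to get the isomorphism. The only difference is cosmetic: you build the map from $M(\rho,k)$ into ${\rm End}(\mathcal{F})$ and spell out the surjectivity argument that the paper dismisses as ``easy to see,'' which is a reasonable amount of added detail.
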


\begin{proof}

Let $B$ be a basis of $V$ as in Definition \ref{de}. We can choose
some set of indices such that $B= (v_i)_{1\leq i\leq n}$ and for
any $\alpha \in {\mathcal C}$ the basis $B_\alpha$ of $V_\alpha$
is just $\{ v_i| i\in \alpha\}$. If $i,j \in \{ 1, \ldots , n \}$,
then $i \rho j$ if and only if $i \in \alpha$ and $j \in \beta$
for some $\alpha, \beta \in \mathcal{C}$ with $\alpha \leq \beta$;
in this case let $E_{ij} \in {\rm End}(\mathcal{F})$ be defined by
$E_{ij}(v_t)=\delta_{jt}v_i$ for any $t$ ($\delta_{ij}$ is
Kronecker's delta). Then it is easy to see that
$E_{ij}E_{pq}=\delta_{jp}E_{iq}$ for any $i,j,p,q$ and $\{E_{ij} \
| \ i  \rho  j \}$ is a basis of ${\rm End}(\mathcal{F})$. Hence
the linear map $E_{ij} \mapsto e_{ij}$ for any $i,j$ with $i  \rho
j$, is an algebra isomorphism between ${\rm End}(\mathcal{F})$ and
$M(\rho,k)$.
\end{proof}

We associate with $\rho$ another useful combinatorial object. Let
$\Gamma=(\Gamma_0,\Gamma_1)$ be the graph whose set $\Gamma_0$ of
vertices is the set $\mathcal{C}$ of equivalence classes. The set
$\Gamma_1$ of arrows is constructed as follows: if $\alpha, \beta
\in \mathcal{C}$, there is an arrow $a$ from $\alpha$ to $\beta$
(we write $s(a)=\alpha$, $t(a)=\beta$) if $\alpha < \beta$ and
there is no $\gamma \in \mathcal{C}$ with $\alpha < \gamma <
\beta$. Clearly, if $\alpha, \beta \in \mathcal{C}$, then $\alpha
\leq \beta$ if and only if there is a path in $\Gamma$ starting
from $\alpha$ and ending at $\beta$ (recall that paths of length
zero are just vertices of $\Gamma$). We denote by $\Gamma^u$ the
undirected graph obtained from $\Gamma$ when we forget the
orientation of arrows.

\section{The lattice of ${\rm End}(\mathcal{F})$-submodules of $V$
}\label{sectionlattice}

Let $\mathcal{F}= (V_{\alpha})_{\alpha \in \mathcal{C}}$ be a
$\rho$-flag on the space $V$. The aim of this section is to
describe the lattice $\mathcal{L}_{{\rm End}(\mathcal{F})} (V)$ of
submodules of the ${\rm End}(\mathcal{F})$-module $V$, where the
action of ${\rm End}(\mathcal{F})$ on $V$ is just the restriction
of the usual ${\rm End}(V)$-action on $V$. We also determine the
automorphisms of this lattice.

If $\mathcal{D}$ is a subset of $\mathcal{C}$, we denote by
$V_{\mathcal{D}}= \displaystyle \sum_{\alpha \in \mathcal{D}}V_{
\alpha}$. By convention $V_{\varnothing}=0$.

\begin{proposition}
The ${\rm End}(\mathcal{F})$-submodules of $V$ are the subspaces
of the form $V_{\mathcal{D}}$, where $\mathcal{D}$ is a subset of
$\mathcal{C}$.
\end{proposition}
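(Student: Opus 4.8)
The plan is to prove the two inclusions separately. First I would fix a basis $B=(v_i)_{1\le i\le n}$ of $V$ as in Definition \ref{de}, with $B_\alpha=\{v_i\mid i\in\alpha\}$, so that $V_\alpha$ has basis $\bigcup_{\beta\le\alpha}B_\beta$ and hence $V_{\mathcal D}$ has basis $\bigcup_{\beta\le\alpha,\ \alpha\in\mathcal D}B_\beta=\{v_i\mid i\in\beta\text{ for some }\beta\le\alpha,\ \alpha\in\mathcal D\}$. The easy direction is that each $V_{\mathcal D}$ is a submodule: using the matrix units $E_{ij}\in{\rm End}(\mathcal F)$ from the proof of Proposition \ref{isoEndMat}, which satisfy $E_{ij}(v_t)=\delta_{jt}v_i$ and exist precisely when $i\rho j$, I observe that if $v_j\in V_{\mathcal D}$ (so $j\in\beta\le\alpha$ for some $\alpha\in\mathcal D$) and $i\rho j$, then $i\in\gamma\le\beta\le\alpha$, so $v_i\in V_{\mathcal D}$; since every element of ${\rm End}(\mathcal F)$ is a linear combination of the $E_{ij}$, the subspace $V_{\mathcal D}$ is stable. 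One checks $V_\alpha$ itself is a submodule the same way (it is $V_{\{\alpha\}}$), which makes the $\rho$-flag data consistent, but the statement only needs the displayed claim.

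For the reverse inclusion, let $W$ be an arbitrary ${\rm End}(\mathcal F)$-submodule of $V$. I want to produce a subset $\mathcal D\subseteq\mathcal C$ with $W=V_{\mathcal D}$. The natural candidate is $\mathcal D=\{\alpha\in\mathcal C\mid v_i\in W\text{ for some }i\in\alpha\}$; equivalently, $\mathcal D$ is the set of classes $\alpha$ that ``appear'' in $W$. The key observation is that for a fixed class $\alpha$, the operators $E_{ii'}$ with $i,i'\in\alpha$ act on $\mathrm{span}\{v_i\mid i\in\alpha\}$ as a full matrix algebra $M_{m_\alpha}(k)$ (since $i\rho i'$ holds for all $i,i'$ in the same class), so if $W$ contains any nonzero vector whose expansion involves some $v_i$ with $i\in\alpha$, then applying a suitable $E_{i'i}$ (or rather extracting the coordinate via these operators) forces $v_{i'}\in W$ for \emph{every} $i'\in\alpha$. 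More carefully: write $w\in W$, $w=\sum_t c_t v_t$; for any index $p$ with $c_p\ne0$ and any $i$ with $i\rho p$ in particular with $i$ in the same class as $p$ we get $E_{ip}(w)=c_p v_i\in W$, hence $v_i\in W$; and then for $\alpha\in\mathcal D$ we even get $v_i\in W$ for all $i$ with $i$ in some class $\beta$ with $\beta\le\alpha$, because $i\rho p$ gives $E_{ip}(w)=c_p v_i\in W$. This shows $V_{\mathcal D}\subseteq W$.

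It remains to show $W\subseteq V_{\mathcal D}$. Take $w=\sum_t c_t v_t\in W$ and suppose $c_p\ne0$; let $\alpha$ be the class of $p$. Then by the previous paragraph $v_p\in W$, so $\alpha\in\mathcal D$, and since $p\in\alpha$ with $\alpha\le\alpha$ we have $v_p\in V_{\mathcal D}$; as this holds for every index $p$ in the support of $w$, we conclude $w\in V_{\mathcal D}$. Combining the two inclusions gives $W=V_{\mathcal D}$, completing the proof.

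The step I expect to require the most care is the claim that the presence of a single coordinate $v_i$ in some $w\in W$ already forces $v_i\in W$: this uses that $E_{ii}\in{\rm End}(\mathcal F)$ (reflexivity of $\rho$) so that $E_{ii}(w)=c_iv_i\in W$, which is clean, but to then get \emph{all} of $V_{\mathcal D}$ rather than just the ``appearing'' basis vectors, one must observe that $i\rho p$ for $p\in\alpha$ and $i\in\beta$ with $\beta\le\alpha$, which is exactly the definition of the partial order on $\mathcal C$. So there is no real obstacle beyond bookkeeping with the indices and the translation between $\rho$ on $\{1,\dots,n\}$ and $\le$ on $\mathcal C$; the argument is essentially the standard ``a submodule of a vector space under a structural matrix algebra is spanned by the appearing coordinate lines, closed downward under $\rho$'' computation.
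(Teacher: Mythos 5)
Your proof is correct and follows essentially the same route as the paper: stability of $V_{\mathcal D}$ is checked on the matrix units $E_{ij}$, and for an arbitrary submodule $W$ one applies $E_{ip}$ (with $i\rho p$) to an element with nonzero $p$-th coordinate to pull out $v_i\in W$, obtaining $V_{\mathcal D}\subseteq W\subseteq V_{\mathcal D}$ for $\mathcal D$ the set of classes appearing in $W$. The only cosmetic difference is your explicit description of the basis of $V_{\mathcal D}$ and the choice of defining $\mathcal D$ via basis vectors in $W$ rather than via nonzero coordinates, which amounts to the same set.
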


\begin{proof}

Since $V_{\mathcal{D}}= \displaystyle \sum_{\alpha \in
\mathcal{D}}V_{\alpha}=\displaystyle \sum_{i=1}^n \{V_i \ | \
\mbox{there exists} \ \alpha \in \mathcal{D} \text{ such that }
\hat{i} \leq \alpha  \}$, and $E_{pq}  v_i = \delta_{q,i} v_p$ for
any $p,q$ with $\hat{p} \leq \hat{q}$, and any $i$, it is clear
that $V_{\mathcal{D}}$ is an ${\rm End}(\mathcal{F})$-submodule of
$V$.

Conversely, let $X$ be an ${\rm End}(\mathcal{F})$-submodule of
$V$. If $v=\sum_{i=1}^n a_i v_i \in X-\{0\}$, and $a_{i_0}\neq 0$
for some $i_0$, then for any $j$ with $\widehat{j} \leq
\widehat{i_0}$ we have $E_{j i_0}v=a_{i_0}v_j$, so $v_j \in X$,
thus $V_{\widehat{i_0}} \subset X$. If $\mathcal{D}=\{
\widehat{i_0} \ | \ \mbox{there exists} \ v=\sum_{i=1}^n a_i v_i
\in X-\{0\} \text{ with } a_{i_0} \neq 0\}$, we obtain that
$V_{\mathcal{D}}= \displaystyle \sum_{\alpha \in
\mathcal{D}}V_{\alpha } \subset X$. Obviously $X \subset
V_{\mathcal{D}}$, so then $X=V_{\mathcal{D}}$.

\end{proof}

\begin{remark} \label{remarkisolattices}
If $\mathcal{D} \subset \mathcal{C}$ and $\mathcal{D}_{max}$ is
the set of maximal elements of $\mathcal{D}$ (with respect to the
partial order of $\mathcal{C}$), it is clear that
$V_{\mathcal{D}}=V_{\mathcal{D}_{max}}$. As $\mathcal{D}_{max}$ is
an antichain in $\mathcal{C}$ (i.e. a subset of $\mathcal{C}$
whose any two different elements are not comparable with respect
to $\leq$), and for any antichain $\mathcal{D}$ we have
$\mathcal{D}_{max}=\mathcal{D}$, we conclude that the ${\rm
End}(\mathcal{F})$-submodules of $V$ are $V_{\mathcal{D}}$ with
$\mathcal{D}$ an antichain in $\mathcal{C}$.

If we denote by $\mathcal{A}(\mathcal{C})$ the set of all
antichains of $\mathcal{C}$, we have a bijection between
$\mathcal{A}(\mathcal{C})$ and $\mathcal{L}_{
{\rm End}(\mathcal{F})}(V)$, given by $\mathcal{D} \mapsto
V_{\mathcal{D}}$. Its inverse is $X \mapsto \mathcal{D}_{max}$,
where $\mathcal{D}$ is a subset of $\mathcal{C}$ such that
$X=V_{\mathcal{D}}$. This bijection induces a lattice structure on
$\mathcal{A}(\mathcal{C}) $  from the lattice $\mathcal{L}_{
{\rm End}(\mathcal{F})}(V)$.  Since for $\mathcal{D}, \mathcal{E} \in
\mathcal{A}(\mathcal{C})$ we have
$$V_{\mathcal{D}}\cap V_{\mathcal{E}}= \sum_{\alpha \in \mathcal{C}} \{ V_{\alpha}  \ |
 \mbox{there exist }   \beta_1 \in \mathcal{D}, \beta_2 \in
\mathcal{E} \mbox{ such that  } \alpha \leq \beta_1 \mbox{ and }
\alpha \leq \beta_2 \}$$ and
$$V_{\mathcal{D}}+ V_{\mathcal{E}}= V_{\mathcal{D} \cup \mathcal{E}},$$
we see that the infimum and the supremum in
$\mathcal{A}(\mathcal{C})$ are given by
$$\mathcal{D} \land \mathcal{E} = \{ \alpha \in \mathcal{C} \ | \mbox{there exist }  \beta_1 \in \mathcal{D}, \beta_2 \in \mathcal{E}
\mbox{ such that  } \alpha \leq \beta_1 \mbox{ and } \alpha \leq
\beta_2 \}_{max}, $$
$$ \mathcal{D} \lor \mathcal{E} = (\mathcal{D} \cup \mathcal{E})_{max} $$
for any $\mathcal{D} , \mathcal{E} \in \mathcal{A}(\mathcal{C})$.
Note that $(\mathcal{D} \cup \mathcal{E})_{max}$ may be strictly
contained in $\mathcal{D} \cup \mathcal{E}$, since $\mathcal{D}
\cup \mathcal{E}$ is not necessarily an antichain.

The partial order relation on $\mathcal{A}(\mathcal{C})$  is
$$\mathcal{D} \leq \mathcal{E} \iff V_{\mathcal{D}} \subset V_{\mathcal{E}} \iff  \mbox{ for any } \alpha \in \mathcal{D} \mbox{ there exists }
\beta \in \mathcal{E} \mbox{ such that  } \alpha \leq \beta.$$

Note that this is not the inclusion, thus
$\mathcal{A}(\mathcal{C})$ is not in general a sub-poset of the
power set $(\mathcal{P}(\mathcal{C}), \subseteq)$.
\end{remark}

The next result describes the automorphisms of the lattice
$\mathcal{A}(\mathcal{C})$.

\begin{proposition} \label{autolattices}
If $g$ is an automorphism of the poset $(\mathcal{C}, \leq)$, then
the map $f_g : \mathcal{A}(\mathcal{C}) \xrightarrow{}
\mathcal{A}(\mathcal{C})$, $f_g(\mathcal{D})=g(\mathcal{D})=\{
g(\alpha) \ | \ \alpha \in \mathcal{D}\}$ is an automorphism of
the lattice $\mathcal{A}(\mathcal{C})$. Moreover, for any lattice
automorphism $f$ of $\mathcal{A}(\mathcal{C})$ there exists an
automorphism $g$ of the poset $(\mathcal{C},\leq)$ such that
$f=f_g$.
\end{proposition}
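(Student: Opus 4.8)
The plan is to establish the two assertions in turn. For the first, I would start by noting that $f_g$ is well defined and bijective: since $g$ preserves and reflects the order $\leq$ on $\mathcal{C}$, it carries antichains to antichains, and $\mathcal{D}\mapsto g(\mathcal{D})$ has inverse $\mathcal{D}\mapsto g^{-1}(\mathcal{D})$ on $\mathcal{A}(\mathcal{C})$. Rather than checking compatibility with $\land$ and $\lor$ directly from the formulas of Remark~\ref{remarkisolattices}, I would invoke the fact that in any lattice the operations are determined by the underlying partial order, so it suffices to show that $f_g$ is an isomorphism of the poset $(\mathcal{A}(\mathcal{C}),\leq)$. By the description of this order recalled above, $\mathcal{D}\leq\mathcal{E}$ means every $\alpha\in\mathcal{D}$ lies below some $\beta\in\mathcal{E}$; applying the order isomorphism $g$ (and its inverse) transports this condition verbatim to $g(\mathcal{D})\leq g(\mathcal{E})$, which is what we want. (Alternatively, one checks directly that $g$ commutes with $(-)_{max}$ and with passing to common lower bounds, since it preserves $\leq$, and then reads off the claim from the explicit formulas for $\land$ and $\lor$.)

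For the converse the crucial point is an intrinsic, lattice-theoretic description of the singleton antichains $\{\alpha\}$, $\alpha\in\mathcal{C}$: I claim they are precisely the join-irreducible elements of $\mathcal{A}(\mathcal{C})$, i.e.\ the elements $x\neq\varnothing$ for which $x=\mathcal{D}\lor\mathcal{E}$ forces $x=\mathcal{D}$ or $x=\mathcal{E}$. For one direction, suppose $\{\alpha\}=\mathcal{D}\lor\mathcal{E}=(\mathcal{D}\cup\mathcal{E})_{max}$. Then $\alpha\in\mathcal{D}\cup\mathcal{E}$ and, since $\mathcal{C}$ is finite and $\alpha$ is the unique maximal element of $\mathcal{D}\cup\mathcal{E}$, every element of $\mathcal{D}\cup\mathcal{E}$ is $\leq\alpha$; so if, say, $\alpha\in\mathcal{D}$, then $\mathcal{D}$ is an antichain all of whose elements are $\leq\alpha$, forcing $\mathcal{D}=\{\alpha\}$. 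For the other direction, if $\mathcal{D}$ is an antichain with $|\mathcal{D}|\geq 2$, pick $\alpha\in\mathcal{D}$; then $\{\alpha\}$ and $\mathcal{D}\setminus\{\alpha\}$ are antichains whose union is the antichain $\mathcal{D}$, so $\mathcal{D}=\{\alpha\}\lor(\mathcal{D}\setminus\{\alpha\})$ is a join of two elements both different from $\mathcal{D}$; and $\varnothing$ is the bottom element. Hence the join-irreducible elements are exactly the singletons.

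Now given a lattice automorphism $f$ of $\mathcal{A}(\mathcal{C})$, both $f$ and $f^{-1}$ preserve joins and the bottom element, hence preserve join-irreducibility, so $f$ restricts to a bijection of the set of singletons; this yields a bijection $g:\mathcal{C}\to\mathcal{C}$ defined by $f(\{\alpha\})=\{g(\alpha)\}$. To see $g$ is a poset automorphism, I would use that $f$ is in particular an order automorphism of $(\mathcal{A}(\mathcal{C}),\leq)$ together with the identity $\{\alpha\}\leq\{\beta\}\iff\alpha\leq\beta$ (immediate from the definition of the order on $\mathcal{A}(\mathcal{C})$): thus $\alpha\leq\beta\iff\{\alpha\}\leq\{\beta\}\iff f(\{\alpha\})\leq f(\{\beta\})\iff\{g(\alpha)\}\leq\{g(\beta)\}\iff g(\alpha)\leq g(\beta)$. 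Finally, to get $f=f_g$, I would observe that every antichain $\mathcal{D}=\{\alpha_1,\dots,\alpha_m\}$ equals $\{\alpha_1\}\lor\cdots\lor\{\alpha_m\}$ in $\mathcal{A}(\mathcal{C})$ (their union is already an antichain), so since $f$ and $f_g$ are both join-preserving and agree on singletons they agree on $\mathcal{D}$; the empty antichain is the bottom element, fixed by both. The only genuine obstacle is the identification of singletons with join-irreducible elements; once that is in place, the rest is a routine unwinding of the order on $\mathcal{A}(\mathcal{C})$.
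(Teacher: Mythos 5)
Your proposal is correct, but it reaches the converse by a genuinely different route than the paper. You characterize the singleton antichains intrinsically as the join-irreducible elements of $\mathcal{A}(\mathcal{C})$ (your two-sided verification of this is sound, using finiteness of $\mathcal{C}$ to know that $(\mathcal{D}\cup\mathcal{E})_{max}=\{\alpha\}$ forces every element of $\mathcal{D}\cup\mathcal{E}$ to lie below $\alpha$), deduce that any lattice automorphism $f$ permutes the singletons, read off $g$ from $f(\{\alpha\})=\{g(\alpha)\}$ together with the equivalence $\{\alpha\}\leq\{\beta\}\iff\alpha\leq\beta$, and then recover $f=f_g$ from the fact that every antichain is the join of its singletons. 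The paper instead stratifies $\mathcal{C}$ by height: it observes that the minimal elements of $\mathcal{A}(\mathcal{C})$ are the singletons over minimal elements of $\mathcal{C}$, obtains a bijection $g_0$ there, then restricts $f$ to the sub-poset $\mathcal{A}(\mathcal{C})\setminus\mathcal{P}(\mathcal{L}_0)$ and repeats, building $g$ as the disjoint union of level-by-level bijections $g_h$. Your argument buys a one-step, purely lattice-theoretic identification of the "atoms" of the problem and an explicit, join-based verification that $f$ and $f_g$ agree on every antichain (a point the paper leaves as "it is clear"); the paper's recursion avoids the notion of join-irreducibility, needing only minimal elements of successive sub-posets, at the cost of a longer inductive bookkeeping and of checking along the way that the restricted maps still preserve the relevant suprema. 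Both arguments use that $\mathcal{C}$ is finite, and your first part (reducing the lattice-automorphism check for $f_g$ to an order-isomorphism check) is a legitimate shortcut for what the paper dismisses as straightforward.
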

\begin{proof}
It is straightforward to check the first part. Now let $f$ be a
lattice automorphism of $\mathcal{A}(\mathcal{C})$. Let
$\mathcal{L}_0(\mathcal{C})$ be the set of all minimal elements of
$\mathcal{C}$, $\mathcal{L}_1(\mathcal{C})$ be the set of all
minimal elements of
$\mathcal{C}\setminus\mathcal{L}_0(\mathcal{C})$ (or equivalently,
elements of height $1$ in $\mathcal{C}$)  and recurrently we
define $\mathcal{L}_h (\mathcal{C})$ for any  $h$, as the set of
all minimal elements of $\mathcal{C}\setminus(\mathcal{L}_0
(\mathcal{C}) \cup \dots \cup \mathcal{L}_{h-1}(\mathcal{C}))$ (or
equivalently, elements of height $h$ in $\mathcal{C}$).

The set $\mathcal{L}_0$ of minimal elements of
$\mathcal{A}(\mathcal{C})$ consists of all singletons $\{
\alpha\}$, with $\alpha \in \mathcal{L}_0 (\mathcal{C})$. As $f$
is a lattice automorphism, we have $f(\mathcal{L}_0)=
\mathcal{L}_0$, and this induces a bijection $g_0 : \mathcal{L}_0
(\mathcal{C}) \xrightarrow{} \mathcal{L}_0(\mathcal{C})$. We note
that $g_0$ (or $f|_{\mathcal{L}_0}$) uniquely determines the value
of $f$ at any non-empty $\mathcal{D} \subset \mathcal{L}_0$.

Next $\mathcal{A}(\mathcal{C}) \setminus
\mathcal{P}(\mathcal{L}_0)$ is a poset with the order inherited
from $\mathcal{A}(\mathcal{C})$, such that for any two elements
their supremum exists. Moreover, $f$ induces by restriction an
isomorphism of posets $f : \mathcal{A}(\mathcal{C})\setminus
\mathcal{P}(\mathcal{L}_0) \xrightarrow{} \mathcal{A}(\mathcal{C})
\setminus \mathcal{P}(\mathcal{L}_0)$ (there is no harm if we also
denote it by $f$), such that $f(x \lor y)=f(x) \lor f(y)$ for any
$x,y$. The set $\mathcal{L}_1$ of minimal elements of
$\mathcal{A}(\mathcal{C})\setminus\mathcal{P}(\mathcal{L}_0)$
consists of all singletons $\{\alpha\}$, where $\alpha$ is minimal
in $\mathcal{C}\setminus\mathcal{L}_0(\mathcal{C})$, i. e. $\alpha
\in \mathcal{L}_1 (\mathcal{C})$. Since $f(\mathcal{L}_1)=
\mathcal{L}_1$, $f$ induces a bijection $g_1
:\mathcal{L}_1(\mathcal{C})
\xrightarrow{}\mathcal{L}_1(\mathcal{C})$, and $g_0$ and $g_1$
uniquely determine the value of $f$ at any antichain $\mathcal{D}$
with $\mathcal{D} \subset \mathcal{L}_0(\mathcal{C}) \cup
\mathcal{L}_1(\mathcal{C})$.

We continue recurrently, by considering for any $h$ the set
$\mathcal{L}_h$ of minimal elements in $\mathcal{A}(\mathcal{C})
\setminus \mathcal{P}(\mathcal{L}_0 \cup \dots \cup
\mathcal{L}_{h-1})$. This consists of all singletons $\{
\alpha\}$, where $\alpha \in \mathcal{L}_h(\mathcal{C})$. As
above, $f$ induces an automorphism of the poset
$\mathcal{A}(\mathcal{C})\setminus \mathcal{P}(\mathcal{L}_0 \cup
\dots \cup \mathcal{L}_{h-1})$, hence a bijection $g_h :
\mathcal{L}_h(\mathcal{C}) \xrightarrow{} \mathcal{L}_h
(\mathcal{C})$. As $\displaystyle \bigcup_h
\mathcal{L}_h(\mathcal{C})=\mathcal{C}$, the coproduct (i.e.
disjoint union) $g$ of all $g_h$'s  is an automorphism of the
poset $\mathcal{C}$, and it is clear that $f=f_g$.
\end{proof}

\section{Isomorphisms between endomorphism algebras of
flags}\label{sectionisomorphisms}

We consider the set
$${\rm Aut}_0(\mathcal{C},\leq)=\{ g \in
{\rm Aut}(\mathcal{C}, \leq) \ | \ m_{\alpha}=m_{g(\alpha)} \text{
for any } \alpha \in \mathcal{C} \},$$ which is a subgroup of
${\rm Aut}(\mathcal{C})$. For any $g\in {\rm Aut}_0(\mathcal{C})$
we define a bijection $\tilde{g}: \{1,\ldots ,  n \} \rightarrow
\{1, \ldots ,  n \}$ as follows: if $\alpha=\{i_1,\ldots , i_r \}$
with $i_1<\ldots <i_r$, and $g(\alpha)=\{j_1, \ldots ,  j_r \}$
with $j_1<\ldots <j_r$, then $\tilde{g}(i_1)=j_1,\ldots ,
\tilde{g}(i_r)=j_r$. If $g,h \in {\rm Aut}_0(\mathcal{C})$, we
clearly have $\widetilde{hg}=\tilde{h}\tilde{g}$, thus $g \mapsto
\tilde{g}$ is an embedding of ${\rm Aut}_0(\mathcal{C})$ into the
symmetric group $S_n$.\\

If  $A \in M_n(k)$, let $A^g \in M_n(k)$ be the matrix whose
$(i,j)$-entry is the $(i,\tilde{g}(j))$-entry of $A$, and let $^gA
\in M_n(k)$ be the matrix whose $(i,j)$-entry is the
$(\tilde{g}(i),j)$-entry of $A$. If $g,h \in {\rm
Aut}_0(\mathcal{C})$, then $$(A^g)^h=A^{gh},\; ^h(^gA)=\,
{^{gh}A},\; (^gA)^h=\, ^g(A^h)$$ for any $A \in M_n(k)$. Also
$$(AB)^g=AB^g,\; ^g(AB)= (^gA) B, \; (A^g) B=A ( ^{g^{-1}}B)$$ for
any $A,B \in M_n(k)$ and any $g \in {\rm Aut}_0(\mathcal{C})$.  It
follows that if $A$ is
invertible, then so is $A^g$, and its inverse is $^g(A^{-1})$.\\

Let $\mathcal{T}=\{(a_{ij})_{i  \rho  j} \subset k^{*} \ | \
a_{ij} a_{jr}=a_{ir} \text{ for any  } i,  j,  r \text{ with } i
\rho  j,  j  \rho  r\}$. Using the terminology of Section
\ref{sectiongradedflags}, $\mathcal{T}$ can be identified with the
set of transitive $k^{*}$-valued functions on $\rho$.
Multiplication on positions (i.e. pointwise multiplication of
functions) makes
$\mathcal{T}$ a group.\\

Let $\mathcal{F}$ be a $\rho$-flag on the space $V$,
and $\mathcal{F}'$ be a $\rho$-flag on the space $V'$.
We keep the notation of Section \ref{structuralendo} for
$\mathcal{F}$, the basis of $V$, and the associated $E_{ij}$'s.
Thus we fix a basis $(v_i)_i$ of $V$ such that $\{v_i \ | \
\hat{i} \leq \alpha \}$ is a basis of $V_{\alpha}$, and similarly
a basis $(v'_{i})_i$ of $V'$ such that $\{v'_i \  | \ \hat{i} \leq
\alpha\}$ is a basis of $V'_{\alpha}$ for any $\alpha$; let
$(E'_{ij})_{i{ \rho} j}$ be
the basis of ${\rm End}(\mathcal{F'})$ associated with $(v_i')_i$. \\

Define
$$F: U(M(\rho,k)) \times {\rm Aut}_0(\mathcal{C}) \times \mathcal{T} \xrightarrow{} {\rm Iso}_{alg}({\rm End}(\mathcal{F}),{\rm End}(\mathcal{F'}))$$
as follows. If  $A \in U(M(\rho,k))$, $g \in {\rm
Aut}_0(\mathcal{C})$ and $(a_{ij})_{i  \rho  j} \in \mathcal{T}$,
let $(w_i)_{1\leq i\leq n}$ be the basis of $V'$ defined by
$$(\ldots , w_i , \ldots )=(\ldots , v'_i, \ldots)\ A^g.$$
Then for any $i, j$ with $i  \rho  j$ let $F_{ij} \in {\rm
End}(V')$ be such that $F_{ij}(w_j)=a_{ij}w_i$ for any $i \rho j$,
and $F_{ij}(w_r)=0$ for any $r\neq j$. Clearly
$F_{ij}F_{jr}=F_{ir}$ for $i \rho j$, $j
 \rho  r$.

\begin{lemma}
With the above notation, let $A=(\lambda_{pq})_{p,q}$  and
$A^{-1}=(\overline{\lambda}_{pq})_{p,q}$. Then
$$F_{ij}=a_{ij}
\displaystyle \sum_{\substack{s  \rho  \tilde{g}(i)
\\ \tilde{g}(j)  \rho  t }} \lambda_{s  \tilde{g}(i)}
\overline{\lambda}_{\tilde{g}(j) t} E'_{st}.$$ In particular
$F_{ij} \in {\rm End}(\mathcal{F}')$.
\end{lemma}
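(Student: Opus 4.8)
The plan is to compute $F_{ij}$ explicitly by expressing the operator $F_{ij}\in\End(V')$ in terms of the fixed basis $(v'_i)_i$ (equivalently, in terms of the $E'_{st}$), starting from its definition through the auxiliary basis $(w_i)_i$. First I would record precisely what the change of basis $(\ldots,w_i,\ldots)=(\ldots,v'_i,\ldots)A^g$ means on coordinates: writing $A^g=(\mu_{pq})_{p,q}$, we have $w_j=\sum_p \mu_{pj}v'_p$, and the inverse change of basis is governed by $(A^g)^{-1}={}^g(A^{-1})$, so that $v'_t=\sum_j \nu_{jt}w_j$ where $(\nu_{jt})_{j,t}={}^g(A^{-1})$. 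I would then translate the entries of $A^g$ and ${}^g(A^{-1})$ back to entries of $A$ and $A^{-1}$ using the definitions: $\mu_{pj}=(A^g)_{pj}=\lambda_{p\,\tilde g(j)}$ and $\nu_{jt}=({}^g(A^{-1}))_{jt}=\overline\lambda_{\tilde g(j)\,t}$.

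Next I would evaluate $F_{ij}$ on an arbitrary fixed basis vector $v'_t$. Expanding $v'_t=\sum_r \nu_{rt}w_r$ and using $F_{ij}(w_r)=a_{ij}\delta_{jr}w_i$, we get $F_{ij}(v'_t)=a_{ij}\nu_{jt}w_i=a_{ij}\nu_{jt}\sum_s\mu_{si}v'_s$. Substituting the expressions for $\mu$ and $\nu$ yields
$$F_{ij}(v'_t)=a_{ij}\sum_s \lambda_{s\,\tilde g(i)}\,\overline\lambda_{\tilde g(j)\,t}\,v'_s,$$
which, comparing with $E'_{st}(v'_u)=\delta_{tu}v'_s$, says exactly that $F_{ij}=a_{ij}\sum_{s,t}\lambda_{s\,\tilde g(i)}\overline\lambda_{\tilde g(j)\,t}E'_{st}$, the sum being over all $s,t$. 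The content of the lemma is that this sum can be restricted to the indices with $s\,\rho\,\tilde g(i)$ and $\tilde g(j)\,\rho\,t$, and that consequently $F_{ij}\in\End(\mathcal F')$.

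For the restriction of the index range, the point is that $A\in M(\rho,k)$, so $\lambda_{pq}=0$ unless $p\,\rho\,q$; hence the term indexed by $(s,t)$ vanishes unless $s\,\rho\,\tilde g(i)$. For the other factor I would invoke the fact, recorded in the excerpt just before the lemma, that if $A$ is invertible in $M(\rho,k)$ then $A^{-1}$ is again in $M(\rho,k)$ — indeed $A^{-1}={}^g(\text{something})$ type identities aside, $M(\rho,k)$ is a subalgebra closed under inversion — so $\overline\lambda_{\tilde g(j)\,t}=0$ unless $\tilde g(j)\,\rho\,t$. This gives the displayed formula. Finally, to see $F_{ij}\in\End(\mathcal F')$, note each $E'_{st}$ appearing has $s\,\rho\,\tilde g(i)$ and $\tilde g(j)\,\rho\,t$; since $i\,\rho\,j$ and $g$ (hence $\tilde g$) is order-compatible, $\tilde g(i)\,\rho\,\tilde g(j)$, so by transitivity $s\,\rho\,t$, i.e. $E'_{st}$ is one of the distinguished basis elements of $\End(\mathcal F')$, and therefore so is their linear combination $F_{ij}$.

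I expect the only real subtlety to be bookkeeping: keeping the two change-of-basis directions straight (which matrix multiplies on which side, and that the inverse change of basis uses ${}^g(A^{-1})=(A^g)^{-1}$ rather than, say, $(A^{-1})^g$), and correctly reading off that the $(p,q)$-entry of $A^g$ is $\lambda_{p\,\tilde g(q)}$ while the $(p,q)$-entry of ${}^gA$ is $\lambda_{\tilde g(p)\,q}$. Once the indices are set up consistently, the computation is a one-line substitution; the vanishing conditions and the membership $F_{ij}\in\End(\mathcal F')$ then follow formally from $A,A^{-1}\in M(\rho,k)$ and transitivity of $\rho$.
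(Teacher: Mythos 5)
Your proposal is correct and takes essentially the same route as the paper: expand $v'_t$ in the $w$-basis via ${}^g(A^{-1})$, apply $F_{ij}$, re-expand $w_i$ through the columns of $A^g$, and then use $A,A^{-1}\in M(\rho,k)$ together with transitivity of $\rho$ and the order-compatibility of $\tilde g$ to restrict the index range and conclude $F_{ij}\in {\rm End}(\mathcal{F}')$. The index bookkeeping you flag ($(A^g)_{pq}=\lambda_{p\,\tilde g(q)}$, $({}^g(A^{-1}))_{pq}=\overline{\lambda}_{\tilde g(p)\,q}$) matches the paper's computation exactly, so there is nothing to add.
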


\begin{proof}
Since $(\ldots , v'_i, \ldots )=(\ldots , w_i,\ldots
)(A^g)^{-1}=(\ldots, w_i, \ldots ) \ ^g(A^{-1})$, we see that
$v'_t= \displaystyle \sum_p \overline{\lambda}_{ \tilde{g}(p)  t}
w_p$ for any $t$. Then \bea
F_{ij}(v'_t)&=&\sum_{p}\overline{\lambda}_{\tilde{g}(p) t}F_{ij}(w_p)\\
&=&a_{ij}\overline{\lambda}_{\tilde{g}(j)
t}w_i\\
&=&a_{ij} \sum_s \overline{\lambda}_{\tilde{g}(j)  t} \lambda_{s
\tilde{g}(i)}v'_s. \eea The coefficient of $v'_s$ in the last sum
may be non-zero only if $\hat{s} \leq
\widehat{\tilde{g}(i)}=g(\hat{i})$ and
$g(\hat{j})=\widehat{\tilde{g}(j)} \leq \hat{t}$, thus it is zero
unless $\hat{s} \leq \hat{t}$. This shows that $ \ \ $
$F_{ij}=a_{ij} \displaystyle \sum_{\substack{s  \rho \tilde{g}(i)
\\ \tilde{g}(j)  \rho  t  }} \lambda_{s \tilde{g}(i)}
\overline{\lambda}_{\tilde{g}(j)  t}E'_{st}$.

\end{proof}

Now define $F(A, g, (a_{ij})_{i  \rho  j})=\varphi$, where
$\varphi : {\rm End}(\mathcal{F}) \xrightarrow{} {\rm
End}(\mathcal{F'})$ is the linear map such that $\varphi
(E_{ij})=F_{ij}$ for any $i  \rho  j$. Clearly $\varphi$ is an
algebra isomorphism.

\begin{proposition} \label{surjective}
$F$ is surjective.
\end{proposition}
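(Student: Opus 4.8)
The plan is to show that every algebra isomorphism $\varphi : {\rm End}(\mathcal{F}) \to {\rm End}(\mathcal{F'})$ arises as $F(A,g,(a_{ij}))$ for a suitable triple. The starting observation is that $\varphi$ carries the lattice $\mathcal{L}_{{\rm End}(\mathcal{F})}(V)$ of submodules of $V$ to the lattice $\mathcal{L}_{{\rm End}(\mathcal{F'})}(V')$, but only after we correct $\varphi$ so that $V'$ becomes an ${\rm End}(\mathcal{F})$-module via $\varphi$ in a way compatible with $V$. Concretely, the idempotents $E_{ii}$ and their images $\varphi(E_{ii})$ are both complete systems of orthogonal (not necessarily primitive) idempotents, and $\varphi$ respects the Peirce decomposition; since $V \cong \bigoplus_i E_{ii}V$ and $V' \cong \bigoplus_i \varphi(E_{ii})V'$ as modules, I first want to use $\varphi$ to transport the module structure and thereby obtain a linear isomorphism $\gamma : V \to V'$ that is ``$\varphi$-semilinear'' in the sense $\gamma(E\cdot v) = \varphi(E)\cdot \gamma(v)$. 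As the introduction of the paper indicates, $\gamma$ need not be exactly $\varphi$-semilinear, but becomes so after replacing $\varphi$ by a deformation $\varphi'$ obtained by twisting with an element of $\mathcal{T}$; producing that deformation and that $\gamma$ is the first main step.

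Second, once we have such a $\gamma$, it induces an isomorphism of submodule lattices $\mathcal{L}_{{\rm End}(\mathcal{F})}(V) \to \mathcal{L}_{{\rm End}(\mathcal{F'})}(V')$, which by Remark \ref{remarkisolattices} is an automorphism of $\mathcal{A}(\mathcal{C})$, hence by Proposition \ref{autolattices} is of the form $f_g$ for a unique poset automorphism $g$ of $(\mathcal{C},\leq)$. The condition $\gamma(V_\alpha) = V'_{g(\alpha)}$ forces $\dim V_\alpha$ and $\dim V'_{g(\alpha)}$ to match for all $\alpha$, which (together with the flag dimension data being the same for $\mathcal F$ and $\mathcal F'$, both modelled on $\rho$) yields $m_\alpha = m_{g(\alpha)}$, so $g \in {\rm Aut}_0(\mathcal{C})$. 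This is the step that pins down the combinatorial parameter $g$.

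Third, with $g$ in hand I compare $\gamma$ against the ``standard'' reindexing $\tilde g$. The basis $(v'_i)$ of $V'$ is adapted to $\mathcal F'$, and $\tilde g$ matches the basis vectors in $\alpha$ with those in $g(\alpha)$ in increasing order; so the vectors $v'_{\tilde g(i)}$, suitably permuted, form another adapted basis whose flag filtration agrees with $\gamma(V_\alpha)$. Therefore $\gamma$, followed by this standard identification, is an automorphism of $V'$ that preserves the flag $\mathcal F'$, i.e. lies in ${\rm End}(\mathcal F')^\times \cong U(M(\rho,k))$; reading off its matrix in the basis $(v'_i)$ gives the invertible matrix $A \in U(M(\rho,k))$, arranged precisely so that $(\dots, w_i, \dots) = (\dots, v'_i, \dots) A^g$ holds with $w_i = \gamma(v_i)$. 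Finally I substitute back: by construction $\varphi'(E_{ij})$ is the endomorphism of $V'$ sending $w_j \mapsto a_{ij} w_i$ and killing the other $w_r$, which is exactly the $F_{ij}$ of the Lemma; hence $\varphi' = F(A, g, (a_{ij}))$, and unwinding the twist recovers $\varphi$ itself as $F$ of a (possibly modified) triple, establishing surjectivity.

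The main obstacle I expect is the first step: getting from an abstract algebra isomorphism $\varphi$ to an honest semilinear $\gamma$ on the modules. A priori $\varphi$ only relates the module structures up to the ambiguity of choosing module isomorphisms $E_{ii}V \to \varphi(E_{ii})V'$ on each Peirce component, and these choices interact across components through the off-diagonal units $E_{ij}$ in a way that is only consistent up to scalars; reconciling them forces exactly the appearance of a transitive $k^*$-valued function, so the bookkeeping here — choosing the scalars $a_{ij} \in k^*$ compatibly along $\rho$, verifying the cocycle-type identity $a_{ij}a_{jr} = a_{ir}$, and checking that the resulting $\varphi'$ really is an algebra map with $\gamma$ genuinely $\varphi'$-semilinear — is the delicate part. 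Everything afterward (identifying $g$, reading off $A$, matching with the Lemma's formula) is essentially bookkeeping given the structural results of Sections \ref{sectionlattice} and \ref{sectionisomorphisms}.
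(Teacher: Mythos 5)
Your proposal follows essentially the same route as the paper's proof: extract the basis $(w_i)$ and the scalars $(a_{ij})\in\mathcal{T}$ from the one-dimensional Peirce components of the images $\varphi(E_{ij})$, twist $\varphi$ by these scalars so that $\gamma: v_i\mapsto w_i$ becomes semilinear, transport the submodule lattice and invoke Proposition \ref{autolattices} to obtain $g\in{\rm Aut}_0(\mathcal{C})$, and read off $A$ from the matrix of $\gamma$ via $M=A^g$. One small correction that does not affect the conclusion: it is $\varphi(E_{ij})$, not $\varphi'(E_{ij})$, that sends $w_j\mapsto a_{ij}w_i$ (the twisted map sends $w_j\mapsto w_i$), so in fact $\varphi=F(A,g,(a_{ij})_{i\rho j})$ directly and no unwinding is needed.
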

\begin{proof}
Let $\varphi:{\rm End}(\mathcal{F}) \xrightarrow{} {\rm End}(\mathcal{F'})$ be
an algebra isomorphism. Denote $F_{ij}=\varphi(E_{ij})$ for any
$i,  j$ with $i  \rho  j$. Then $(F_{ij})_{i
\rho  j}$ is a basis of ${\rm End}(\mathcal{F'})$,
$F_{ij}F_{jr}=F_{ir}$ for any $i,  j,  r$ with $i \rho
 j$ and $j  \rho  r$, and $(F_{ii})_i$ is a complete
set of orthogonal idempotents of ${\rm End}(\mathcal{F}')$.

It is easy to see that $V'= \displaystyle \bigoplus_{i=1}^n Q_i$,
where $Q_i={\rm Im} F_{ii} \neq 0$, so $\text{dim } Q_i=1$ for any $i$. Choose
some non-zero $w_i \in Q_i$ for any $i$. Then $(w_i)_i$ is a basis
of $V'$. Since $F_{ij}=\varphi(E_{ij})\neq 0$,
$F_{ij}(Q_r)=F_{ij}F_{rr}(V')=0$ for any $r \neq j$, and
$F_{ij}(Q_j)=F_{ii}F_{ij}(Q_j) \subset Q_i$, we see that
$F_{ij}(w_j)=a_{ij}w_i$ for some $a_{ij} \in k^*$. Since
$F_{ij}F_{jr}=F_{ir}$, we must have $a_{ij}a_{jr}=a_{ir}$ for any
$i,  j,  r$ with $i  \rho  j$ and $j  \rho
r$. In particular, $a_{ii}=1$ for any $i$.

Let $\gamma : V \xrightarrow{} V'$ be the linear isomorphism such
that $\gamma (v_i)=w_i$ for any $i$. Regard $V$ as a left
${\rm End}(\mathcal{F})$-module, and $V'$ as a left
${\rm End}(\mathcal{F}')$-module with the usual action of the
endomorphism algebra. If $i  \rho  j$ we have
$$\gamma(E_{ij}  v_t)=\gamma (\delta_{jt}v_i)=\delta_{jt}w_i, \text{ and }$$
$$F_{ij} \gamma (v_t)= F_{ij}  w_t= \delta_{jt}a_{ij}w_i$$
for any $t$. We get
\begin{equation} \label{R1}
\varphi(E_{ij})  \gamma(v_t)=a_{ij} \gamma(E_{ij}  v_t) \text{ for
any } i  \rho  j \text{ and  any }t.
\end{equation}
 Thus in general $\gamma$ is not a
$\varphi$-isomorphism, the obstruction being the scalars $a_{ij}$.
However, $\gamma$ is a $\varphi'$-isomorphism for a deformation
$\varphi'$ of $\varphi$. Indeed, the linear map $\theta:
{\rm End}(\mathcal{F}) \xrightarrow{} {\rm End}(\mathcal{F})$ defined by
$\theta(E_{ij})= a_{ij}^{-1}E_{ij}$ for any $i  \rho
j$, is an algebra automorphism, and (\ref{R1}) shows that
\begin{equation} \label{R2}
\gamma(E_{ij}  v_t)= (\varphi \theta)(E_{ij}) \gamma(v_t) \text{
for any  }i  \rho  j, \text{ and any }t. \end{equation} Thus
$\gamma$ is a $\varphi'$-isomorphism, where $\varphi'=\varphi
\theta: {\rm End}(\mathcal{F}) \xrightarrow{}{\rm
End}(\mathcal{F'})$ is also an algebra isomorphism. Then the
lattice of ${\rm End}(\mathcal{F})$-submodules of $V$ is
isomorphic to the lattice of ${\rm End}(\mathcal{F'})$-submodules
of $V'$ via the map
$$\overline{\gamma}: \mathcal{L}_{{\rm End}(\mathcal{F})}(V) \xrightarrow{} \mathcal{L}_{{\rm End}(\mathcal{F'})} (V'), $$
$$ \overline{\gamma}(X)=\gamma(X) \text{ for any  } {\rm End}(\mathcal{F}) \text{-submodule } X \text{ of  }V.$$
By Remark \ref{remarkisolattices}, there is an isomorphism of
lattices $\Phi : \mathcal{L}_{{\rm End}(\mathcal{F})}(V)
\xrightarrow{}\mathcal{A}(\mathcal{C})$, given by
$\Phi(X)=\mathcal{D}_{max}$, where $X=V_{\mathcal{D}}$; its
inverse is $\Phi^{-1}(\mathcal{D})=V_{\mathcal{D}}$ for any
$\mathcal{D} \in \mathcal{A}(\mathcal{C})$. Similarly, there is an
isomorphism of lattices $\Phi':\mathcal{L}_{{\rm End}(\mathcal{F'})}(V')
\xrightarrow{}\mathcal{A}(\mathcal{C})$. Let $f :
\mathcal{A}(\mathcal{C}) \xrightarrow{}\mathcal{A}(\mathcal{C})$
be the isomorphism of lattices such that the diagram
$$\xymatrix{  \mathcal{L}_{{\rm End}(\mathcal{F})}(V) \ar[d]_{\Phi}^{\sim} \ar[r]^{\overline{\gamma}}_{\sim}  & \mathcal{L}_{{\rm End}(\mathcal{F'})}(V')  \ar[d]^{\Phi'}_{\sim} \\ \mathcal{A}(\mathcal{C}) \ar@{.>}[r]^{f} & \mathcal{A}(\mathcal{C}) } $$
is commutative. By Proposition \ref{autolattices}, $f=f_g$ for
some automorphism $g$ of the poset $(\mathcal{C}, \leq)$. Then for
any $\alpha \in \mathcal{C}$ \bea
\gamma(V_{\alpha})&=&\overline{\gamma}(V_{\alpha})\\
&=&(\Phi'^{-1}f_g \Phi)(V_{\alpha})\\
&=&(\Phi'^{-1}f_g)(\{\alpha\})\\
&=& \Phi'^{-1}(\{g(\alpha) \})\\
&=&V'_{g(\alpha)}.\eea This shows that $\text{dim } V_{\alpha}=\text{dim }
V'_{g(\alpha)}=\text{dim } V_{g(\alpha)}$ for any $\alpha \in
\mathcal{C}$. Since $\text{dim }V_{\alpha}=\displaystyle \sum_{\beta \leq
\alpha}m_{\beta}$, we see by induction on the length of $\alpha$
that $m_{\alpha}=m_{g(\alpha)}$ for any $\alpha \in \mathcal{C}$, so
$g \in {\rm Aut}_0(\mathcal{C})$.

 Let $M$ be the matrix of $\gamma$ in
the bases $(v_i)_i$ and $(v'_i)_i$. Since
$\gamma(V_{\alpha})=V'_{g(\alpha)}$, $M$ may have non-zero entries
only on positions $(i,j)$ with $\hat{i} \leq g(\hat{j})$, i.e. on
blocks $(\alpha,\beta)$ with $\alpha \leq g(\beta)$. We see that
$A=M^{g^{-1}} \in M(\rho, k)$ and $M=A^g$. Moreover,
 $A$ is invertible and $A^{-1}=\, ^{g^{-1}}(M^{-1})$.

 Now we have $\varphi=F(A, g, (a_{ij})_{i  \rho  j})$,
 and this ends the proof.

\end{proof}

We consider the relation $\approx$ on $U(M(\rho,k))
\times {\rm Aut}_0(\mathcal{C}) \times \mathcal{T}$ defined by
$(A,g,(a_{ij})_{i  \rho  j}) \approx (B, h, (b_{ij})_{i
 \rho  j})$ if and only if $g=h$ and there exist
$d_1, \ ..., \ d_n \in k^*$ such that $a_{ij}b^{-1}_{ij}=d_i
d^{-1}_j$  for any $i  \rho j$, and $B^g=  A^g \, {\rm diag}(d_1,
\ldots , \ d_n)$ (or equivalently $AB^{-1}=\,
^{g^{-1}}(D^{-1})^{g^{-1}}$ where $D={\rm diag}(d_1,\ldots , \ d_n
)$); here ${\rm diag}(d_1,\ldots , \ d_n )$ denotes the diagonal
matrix with diagonal entries $d_1,\ldots,d_n$. This is clearly an
equivalence relation.

\begin{theorem} \label{theoremiso}
With the above notation, $F(A,g,(a_{ij})_{i  \rho  j})=
F(B,h, (b_{ij})_{i \rho  j})$ if and only if
$(A,g,(a_{ij})_{i \rho  j}) \approx (B,h, (b_{ij})_{i
\rho  j}) $. Thus $F$ induces a bijection
$$\overline{F}: \frac{U(M(\rho,k)) \times {\rm Aut}_0(\mathcal{C}) \times \mathcal{T}}{\approx} \longrightarrow{}
{\rm Iso}_{alg}({\rm End}(\mathcal{F}),{\rm End}(\mathcal{F'})).$$
\end{theorem}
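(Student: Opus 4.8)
The plan is to prove the two implications separately, the easy one being that $(A,g,(a_{ij})) \approx (B,h,(b_{ij}))$ forces $F(A,g,(a_{ij})) = F(B,h,(b_{ij}))$. Write $\varphi = F(A,g,(a_{ij}))$ and $\psi = F(B,h,(b_{ij}))$; since both are determined by their values on the $E_{ij}$, it suffices to show $\varphi(E_{ij}) = \psi(E_{ij})$ for all $i\rho j$. I would unwind the construction of $F$: the hypothesis $g=h$ means the permutations $\tilde g$ and $\tilde h$ coincide, and the condition $B^g = A^g\,{\rm diag}(d_1,\ldots,d_n)$ relates the two auxiliary bases $(w_i)$ (built from $A^g$) and $(w'_i)$ (built from $B^g$) of $V'$ by $w'_i = d_i w_i$. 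Then the operators $F_{ij}$ (for $\varphi$) and $F'_{ij}$ (for $\psi$) satisfy $F_{ij}(w_j) = a_{ij}w_i$ and $F'_{ij}(w'_j) = b_{ij}w'_i$, i.e. $F'_{ij}(w_j) = b_{ij}(d_i/d_j) w_i = a_{ij} w_i$ using $a_{ij}b_{ij}^{-1} = d_i d_j^{-1}$, and both vanish on the other $w_r$; hence $F_{ij} = F'_{ij}$, so $\varphi = \psi$. This direction is essentially bookkeeping once the translation $w'_i = d_i w_i$ is pinned down.

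For the converse, suppose $F(A,g,(a_{ij})) = F(B,h,(b_{ij})) = \varphi$. Applying $\varphi$ to the idempotents $E_{ii}$ and using that $\varphi(E_{ii}) = F_{ii}$ is the projection onto the line $kw_i$ in the first construction and onto $kw'_i$ in the second, I get $kw_i = {\rm Im}\,\varphi(E_{ii}) = kw'_i$ for each $i$, so $w'_i = d_i w_i$ for scalars $d_i \in k^*$. Next, the automorphism $g$ is recovered intrinsically from $\varphi$ by the argument already run in the proof of Proposition \ref{surjective}: $\varphi$ induces the lattice isomorphism $\overline{\gamma}$ between the submodule lattices, hence $f_g$ on $\mathcal A(\mathcal C)$, and $f_g$ determines $g$ uniquely by Proposition \ref{autolattices}; the same applies with $h$, so $g = h$ and therefore $\tilde g = \tilde h$. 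Now apply $\varphi(E_{ij}) = F_{ij} = F'_{ij}$: evaluating on $w_j$ gives $a_{ij}w_i = F'_{ij}(w_j) = F'_{ij}(d_j^{-1}w'_j) = d_j^{-1}b_{ij}w'_i = d_j^{-1}b_{ij}d_i w_i$, so $a_{ij}b_{ij}^{-1} = d_i d_j^{-1}$ for all $i\rho j$. Finally, the relation $w'_i = d_i w_i$ translates into a matrix identity between the change-of-basis matrices $B^g$ and $A^g$: since $(\ldots,w_i,\ldots) = (\ldots,v'_i,\ldots)A^g$ and $(\ldots,w'_i,\ldots) = (\ldots,v'_i,\ldots)B^h = (\ldots,v'_i,\ldots)B^g$, the equality $w'_i = d_i w_i$ reads $B^g = A^g\,{\rm diag}(d_1,\ldots,d_n)$, which is precisely the remaining clause of $\approx$.

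I expect the main obstacle to be the first, bookkeeping-heavy direction, specifically keeping straight which $\tilde g$ acts where and verifying that $g=h$ really does make the two permutations and hence the two matrix operations $A \mapsto A^g$ coincide before one can even compare $w_i$ and $w'_i$; the interplay of the identities $(A^g)^h = A^{gh}$, $(AB)^g = AB^g$, etc., recorded earlier in Section \ref{sectionisomorphisms}, will be needed to pass cleanly between the "basis" formulation ($w'_i = d_i w_i$) and the "matrix" formulation ($B^g = A^g\,{\rm diag}(d_i)$), and to check the parenthetical equivalent form $AB^{-1} = {}^{g^{-1}}(D^{-1})^{g^{-1}}$. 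Once the equivalence $F(A,g,(a_{ij})) = F(B,h,(b_{ij})) \iff (A,g,(a_{ij})) \approx (B,h,(b_{ij}))$ is established, the induced map $\overline F$ on the quotient is well-defined and injective by construction, and surjective by Proposition \ref{surjective}, giving the asserted bijection.
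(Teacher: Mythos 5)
Your proposal follows the paper's argument in most respects: the easy implication is the same bookkeeping with $w'_i=d_iw_i$, and in the converse you extract the scalars $d_i$ from the lines ${\rm Im}\,\varphi(E_{ii})$, obtain $a_{ij}b_{ij}^{-1}=d_id_j^{-1}$ by evaluating on $w_j$, and read off $B^g=A^g\,{\rm diag}(d_1,\ldots,d_n)$ from $w'_i=d_iw_i$, exactly as in the paper. The one genuine divergence is how you prove $g=h$. The paper does this directly from the matrix identity $B^h=A^g\,{\rm diag}(d_1,\ldots,d_n)$: the $(\alpha,\beta)$-block of $B^h$ (resp.\ $A^g$) can be nonzero only when $\alpha\leq h(\beta)$ (resp.\ $\alpha\leq g(\beta)$), while the $(h(\beta),\beta)$-block of $B^h$ and the $(g(\beta),\beta)$-block of $A^g$ are nonzero because the diagonal blocks of invertible matrices in $M(\rho,k)$ are invertible; comparing supports gives $h(\beta)\leq g(\beta)\leq h(\beta)$. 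You instead recover $g$ ``intrinsically'' from $\varphi$ through the lattice of submodules and Proposition \ref{autolattices}.

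That route can be made to work, but as written it leans on Proposition \ref{surjective} for something it does not literally prove. In that proposition the poset automorphism is \emph{defined} from an arbitrary $\varphi$; what you need here is the statement that if $\varphi=F(A,g,(a_{ij})_{i\rho j})$, then the automorphism recovered from $\varphi$ (via the lines $Q_i={\rm Im}\,\varphi(E_{ii})$ and the induced lattice isomorphism) coincides with the input parameter $g$ of the triple. Concretely, you must check that for $\gamma(v_i)=w_i$ with $(\ldots,w_i,\ldots)=(\ldots,v'_i,\ldots)A^g$ one has $\gamma(V_\alpha)=V'_{g(\alpha)}$ for all $\alpha\in\mathcal{C}$; this holds because $A\in M(\rho,k)$ forces $w_i\in V'_{g(\hat{i})}$, and equality follows by a dimension count using the invertibility of $A$ and $m_\beta=m_{g(\beta)}$ (one also notes that the induced lattice map depends only on the lines $Q_i$, hence not on the choice of the $w_i$, and that $f_g$ determines $g$ on singletons). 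With this verification inserted, your argument is complete; without it, the claim ``$g$ is recovered intrinsically, so $g=h$'' is the missing step. The paper's block-support argument avoids this detour and is the quicker of the two, while your version has the merit of explaining conceptually why the parameter $g$ is an invariant of the isomorphism $\varphi$.
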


\begin{proof}
Denote $F(A,g,(a_{ij})_{i  \rho  j})=\varphi $ and
$F(B,h,(b_{ij})_{i  \rho  j})=\psi$. Thus
$\varphi(E_{ij})=F_{ij}$, where $F_{ij}(w_j)=a_{ij}w_i$; here $(
\ldots, w_i, \ldots )= (\ldots,  u'_i, \ldots)A^g$. Also $\psi
(E_{ij}) = F'_{ij}$, where $F'_{ij}(w'_j)=b_{ij}w'_i$ and
$(\ldots,  w'_i, \ldots)=(\ldots, u'_i, \ldots)B^h$. Since
$\varphi=\psi$, then $F_{ij}=F'_{ij}$ for any $i  \rho
 j$. Now ${\rm Im} F_{ij}=<w_j>$ and ${\rm Im} F'_{ij}= <w'_j>$, so we must
have $w'_j=d_j w_j$ for some $d_j \in k^*$, for any $j$. Thus
$(\ldots ,  w'_i, \ldots)=(\ldots,  w_i, \ldots){\rm diag}(d_1,
\ldots,
 d_n)$, showing that $B^h=A^g \, {\rm diag}(d_1, \ldots,d_n)$.

The $(\alpha,\beta)$-block of $B^h$ may be non-zero only if
$\alpha \leq h(\beta)$, and the $(h(\beta),\beta)$-block of $B^h$
is non-zero (since the $(\beta,\beta)$-block of $B$ is non-zero).
Similarly, the $(\alpha, \beta)$-block of $A^g$ may be non-zero
only if $\alpha \leq g(\beta)$; the same holds for $A^g\, {\rm
diag}(d_1, \ ..., \ d_n)$. Since $B^h=A^g \, {\rm diag}(d_1, \
...,\ d_n)$, the $(h(\beta), \beta)$-block of $A^g$ must be
non-zero, so then $h(\beta) \leq g(\beta)$.  Similarly, since the
$(g(\beta),\beta)$-block of $A^g$ is a non-zero, we obtain that
$g(\beta) \leq h(\beta)$. Thus $g=h$.

On the other hand, $F'_{ij}(w_j)=F'_{ij}(d^{-1}_j w'_j)=d^{-1}_j
b_{ij}w'_i=d_i d^{-1}_jb_{ij}w_i$, so $F_{ij}=F'_{ij}$ requires
$a_{ij}=d_id^{-1}_j b_{ij}$ for any $i  \rho  j$.  We
conclude that $(A, g, (a_{ij})_{i  \rho  j}) \approx
(B,h, (b_{ij})_{i  \rho  j})$.

Conversely, the above computations show that the $F'_{ij}$'s
associated to each of the two triples are the same.
\end{proof}

Let us fix some notation. If $G$ and $A$ are groups, a right
action of $G$ on $A$ is a mapping $A \times G \xrightarrow{}A$,
$(a,g) \mapsto a \cdot g$, such that $(ab) \cdot g= (a \cdot g)(b
\cdot g)$ and $(a \cdot g) \cdot h=a \cdot (gh)$ for any $a,b \in
A$, $g,h \in G$. This is equivalent to giving a group morphism
$\varphi : G \xrightarrow{} {\rm Aut}(A)$, where the multiplication of
${\rm Aut}(A)$ is the opposite map composition. Indeed, one can take
$\varphi(g)(a)=a \cdot g$. In this case, the right crossed product
$G \ltimes A$ is the cartesian product $G \times A$ of sets, with
multiplication $(h \ltimes b)(g \ltimes a)= hg \ltimes (b \cdot
g)a$ for any $g,h \in G$, $a,b \in A$; here we denote the pair
$(g,a)$ by $g \ltimes a$. The group ${\rm Aut}_0(\mathcal{C})$ acts to
the right on $\mathcal{T}$ by $(a_{ij})_{i  \rho  j}
\cdot g = (a_{\tilde{g}(i)\tilde{g}(j)})_{i  \rho
j}$, thus we can form a right crossed product ${\rm Aut}_0(\mathcal{C})
\ltimes \mathcal{T}$.

Similarly, a left action of $G$ on $A$ is a mapping $G \times A
\xrightarrow{}A$, $(g,a) \mapsto g \cdot a$, such that $g \cdot
(ab)=(g \cdot a)(g \cdot b)$ and $g \cdot (h \cdot a)= (gh) \cdot
a$; this is the same with giving a group morphism $G
\xrightarrow{} {\rm Aut}(A)$, where this time the multiplication of
${\rm Aut}(A)$ is just the map composition. The left crossed product $A
\rtimes G$ is the set $A \times G$ with the multiplication $(a
\rtimes g)(b \rtimes h)=a (g \cdot b) \rtimes gh$.

Now if $G \ltimes A$ is a right crossed product, and $B$ is a
group, then a left $G \ltimes A$-action on $B$ is a pair
consisting of a left $G$-action  on $B$ and a left $A$-action on
$B$, such that
\begin{equation} \label{RC} a \cdot (g \cdot x) = g
\cdot ((a \cdot g) \cdot x) \;\; \text{ for any } a \in A, \ g \in
G, \ x \in B . \end{equation} The action of $G \ltimes A$ on $B$ is
$(g \ltimes a) \cdot x = g \cdot (a \cdot x)$. Indeed, this easily
follows from the fact that $A$, respectively $G$, embeds into $G
\ltimes A$ by $a \mapsto 1 \ltimes a$, respectively $g \mapsto g
\ltimes 1$, and $(1 \ltimes a) (g \ltimes 1)= g \ltimes (a \cdot
g)= (g \ltimes 1)(1 \ltimes (a \cdot g  ))$.

For later use (in Section \ref{sectionorbits}), we note that if $G
\ltimes A$ is a right crossed product, and $B$ is just a set, then
a right action of $G \ltimes A$ on $B$ is a pair consisting of a
right action of $G$ on $B$ and a right action of $A$ on $B$
satisfying the compatibility condition
\begin{equation}\label{comprightactionset}
(x\cdot a)\cdot g=(x\cdot g)\cdot (a\cdot g) \;\;\;\;\; \mbox{ for
any }x\in B, a\in A, g\in G.
\end{equation}

Also for later use, we mention that if $A\rtimes G$ is a left
semidirect product, and both $A$ and $G$ act to the right on a set
$B$, then $x\cdot (a\rtimes g)=(x\cdot a)\cdot g$ defines a right
action of $A\rtimes G$ on $B$, provided that
\begin{equation} \label{comprightactionset2}
(x\cdot g)\cdot a=(x\cdot (g\cdot a))\cdot g \;\;\;\;\; \mbox{ for
any }x\in B, a\in A, g\in G.
\end{equation}

We use some of these facts in the following particular situation.
The group $\mathcal{T}$ acts to the left on the group
$U(M(\rho,k))$ as follows: if $(a_{ij})_{i \rho  j} \in
\mathcal{T}$ and $A=(\alpha_{ij})_{1 \leq i,j\leq n} \in
U(M(\rho,k))$, then $(a_{ij})_{i \rho  j} \cdot A$ is the matrix
whose $(i,j)$-spot is $\alpha_{ij}a_{ij}$ if $i  \rho j$, and $0$
elsewhere.

We note that $(a_{ij})_{i  \rho  j} \cdot A \in
U(M(\rho ,k))$, its inverse being just $(a_{ij})_{i
\rho  j} \cdot A^{-1}$. The group ${\rm Aut}_0(\mathcal{C})$
also acts to the left on $U(M(\rho,k))$ by $g \cdot
A=\, ^{g^{-1}}A^{g^{-1}}$. The two actions are compatible in the
sense of (\ref{RC}), since
\begin{equation} \label{eq*}
(a_{ij})_{i  \rho  j} \cdot  \,  ^{g^{-1}}A^{g^{-1}}
=\,  ^{g^{-1}} \large( (a_{\tilde{g}(i) \tilde{g}(j)})_{i
\rho  j} \cdot A \large)^{g^{-1}} \end{equation} for
any $(a_{ij})_{i  \rho  j} \in \mathcal{T}$ and $g \in
{\rm Aut}_0(\mathcal{C})$. Indeed, if $A=(\alpha_{ij})_{1 \leq i,j \leq
n}$, it is easy to check that both sides in (\ref{eq*}) have
$a_{ij}\alpha_{\tilde{g}^{-1}(i) \tilde{g}^{-1}(j)}$ on the
$(i,j)$-spot for any $i  \rho  j$.

We obtain that ${\rm Aut}_0(\mathcal{C}) \ltimes \mathcal{T} $ acts to
the left on $U(M(\rho,k))$ by
$$(g \ltimes (a_{ij})_{i  \rho  j}) \cdot A=\, ^{g^{-1}}((a_{ij})_{i  \rho  j} \cdot A)^{g^{-1}},$$
and we can form the left crossed product $U(M(\rho ,k))
\rtimes ({\rm Aut}_0(\mathcal{C}) \ltimes \mathcal{T})$. Its
multiplication is given by \begin{equation} \label{mult} (B
\rtimes (h \ltimes (b_{ij})_{i  \rho  j})) \cdot (A
\rtimes (g \ltimes (a_{ij})_{i  \rho  j}))= (B \;\;
^{h^{-1}}((b_{ij})_{i \rho j} \cdot A)^{h^{-1}}) \rtimes (hg \ltimes
(b_{\tilde{g}(i) \tilde{g}(j)}a_{ij})_{i  \rho  j}).
\end{equation}
 Now if we apply the construction of the map $F$ to the case where $\mathcal{F'}=\mathcal{F}$, we obtain a
surjective map
$$F : U(M(\rho,k)) \rtimes ({\rm Aut}_0(\mathcal{C}) \ltimes \mathcal{T}) \xrightarrow{} {\rm Aut}({\rm End}(\mathcal{F})).$$

\begin{theorem}\label{teoremaauto}
$F$ is a morphism of groups, and it induces a group isomorphism
$$\frac{U(M(\rho,k)) \rtimes ({\rm Aut}_0(\mathcal{C}) \ltimes \mathcal{T})}{D} \simeq {\rm Aut}({\rm End}(\mathcal{F})),$$
where $D= \{ diag(d_1, \ldots,  d_n) \rtimes (Id \ltimes (d^{-1}_i
d_j)_{i  \rho  j}) \ | \ d_1,  \ldots,  d_n \in k^*
\}.$
\end{theorem}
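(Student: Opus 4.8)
The statement combines two things: that the surjective map $F$ from the crossed product $U(M(\rho,k)) \rtimes ({\rm Aut}_0(\mathcal{C}) \ltimes \mathcal{T})$ to ${\rm Aut}({\rm End}(\mathcal{F}))$ is a group homomorphism, and that its kernel is exactly $D$. Surjectivity is already Proposition \ref{surjective} (applied with $\mathcal{F}'=\mathcal{F}$), so only these two points remain. First I would verify that $F$ is multiplicative: take two triples $B \rtimes (h \ltimes (b_{ij}))$ and $A \rtimes (g \ltimes (a_{ij}))$, let $\varphi=F(B,h,(b_{ij}))$, $\psi=F(A,g,(a_{ij}))$, and compute $\varphi\psi$ on the basis elements $E_{ij}$ of ${\rm End}(\mathcal{F})$, comparing with $F$ applied to the product (\ref{mult}). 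Concretely, $\psi(E_{ij})=F_{ij}$ where $F_{ij}(w_j)=a_{ij}w_i$ and $(\ldots,w_i,\ldots)=(\ldots,v_i,\ldots)A^g$; then $\varphi(F_{ij})$ must be computed by first expressing the $w_i$ in the new $F$-basis attached to $B,h$, and the factor $^{h^{-1}}(\ldots)^{h^{-1}}$ together with the Young-subgroup twist $(b_{\tilde{g}(i)\tilde{g}(j)})$ on $\mathcal{T}$ should fall out of the identities $(A^g)^h=A^{gh}$, $(^gA)B=\,^g(AB)$, $(A^g)B=A(^{g^{-1}}B)$ and the compatibility (\ref{eq*}) already recorded before the theorem. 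This is the step that is mostly bookkeeping but is where an error would hide; it is the main obstacle in the sense of requiring care, since one must track the interaction of the matrix conjugation $^{g^{-1}}(\cdot)^{g^{-1}}$ with the pointwise $\mathcal{T}$-action in the right order dictated by (\ref{mult}).

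**Kernel computation.** Once $F$ is known to be a homomorphism, $\ker F$ is a subgroup, and by the first isomorphism theorem it suffices to identify it with $D$. An element $A \rtimes (g \ltimes (a_{ij}))$ lies in $\ker F$ precisely when $F(A,g,(a_{ij}))=\mathrm{id}_{{\rm End}(\mathcal{F})}=F(I, \mathrm{Id}, (1)_{i\rho j})$. Here I would invoke Theorem \ref{theoremiso} directly in the case $\mathcal{F}'=\mathcal{F}$: $F(A,g,(a_{ij}))=F(I,\mathrm{Id},(1)_{i\rho j})$ if and only if $(A,g,(a_{ij})) \approx (I,\mathrm{Id},(1)_{i\rho j})$, i.e. $g=\mathrm{Id}$ and there exist $d_1,\dots,d_n \in k^*$ with $a_{ij}\cdot 1^{-1}=d_id_j^{-1}$ for all $i\rho j$ and $I^{\mathrm{Id}} = A^{\mathrm{Id}}\,\mathrm{diag}(d_1,\dots,d_n)$. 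The second condition reads $A = \mathrm{diag}(d_1,\dots,d_n)^{-1} = \mathrm{diag}(d_1^{-1},\dots,d_n^{-1})$; after a harmless renaming $d_i \leftrightarrow d_i^{-1}$ this says exactly that the kernel triple is $(\mathrm{diag}(d_1,\dots,d_n), \mathrm{Id}, (d_i^{-1}d_j)_{i\rho j})$, which is the description of $D$. So $\ker F = D$ as sets.

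**Wrapping up.** Finally I would note that $D$ is automatically a (normal) subgroup since it is the kernel of a group homomorphism — no separate verification of closure under multiplication is needed, though it is reassuring that (\ref{mult}) shows directly that $\mathrm{diag}(d_i)\rtimes(\mathrm{Id}\ltimes(d_i^{-1}d_j))$ times $\mathrm{diag}(e_i)\rtimes(\mathrm{Id}\ltimes(e_i^{-1}e_j))$ equals $\mathrm{diag}(d_ie_i)\rtimes(\mathrm{Id}\ltimes(d_i^{-1}e_i^{-1}d_je_j))$, again of the same form. Then the first isomorphism theorem gives
$$\frac{U(M(\rho,k)) \rtimes ({\rm Aut}_0(\mathcal{C}) \ltimes \mathcal{T})}{D} \simeq {\rm Aut}({\rm End}(\mathcal{F})),$$
completing the proof. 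The only genuinely laborious ingredient is the multiplicativity check; everything else is a direct translation of Theorems \ref{surjective} and \ref{theoremiso} into the language of the crossed product.
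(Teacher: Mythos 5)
Your proposal is correct and follows essentially the same route as the paper: multiplicativity of $F$ checked by evaluating the composite on the basis elements $E_{ij}$ and matching the result against the product formula (\ref{mult}), and the kernel identified via Theorem \ref{theoremiso} as the triples $\approx$-equivalent to the identity element, which (after the renaming $d_i\mapsto d_i^{-1}$, exactly as you note) is $D$; the first isomorphism theorem then gives the stated quotient. The only part you leave schematic is the multiplicativity computation, which the paper carries out in full, but your outline (the Lemma's formula for $F_{ij}$, the identities for $A^g$, $^gA$ and the compatibility (\ref{eq*})) is precisely the bookkeeping the paper performs.
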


\begin{proof}
Let $\varphi=F(A \rtimes (g \ltimes (a_{ij})_{i  \rho
j}))$, where $A=(\lambda_{ij})_{i,j}$; denote
$A^{-1}=(\overline{\lambda_{ij}})_{i,j}$. Let also $\psi=F(B
\rtimes (h \ltimes (b_{ij})_{i  \rho  j}))$, with
$B=(\mu_{ij})_{i,j}$ and $B^{-1}=(\overline{\mu}_{ij})_{i,j}$. We
will show that \begin{equation} \label{eqmorph} \psi \varphi =F( \
(B \rtimes (h \ltimes (b_{ij})_{i  \rho  j})) \cdot (A
\rtimes (g \ltimes (a_{ij})_{i  \rho  j})) \
)\end{equation} and this will prove that $F$ is a group morphism.
We know that
$$\varphi(E_{ij})=a_{ij} \displaystyle \sum_{\substack{s
\rho  \tilde{g}(i) \\ \tilde{g}(j)  \rho  t
} } \lambda_{s  \tilde{g}(i)} \overline{\lambda}_{\tilde{g}(j)
t}E_{st},$$ so then \bea \psi \varphi (E_{ij})&=&a_{ij}
\sum_{\substack{s  \rho  \tilde{g}(i) \\ \tilde{g}(j)
\rho  t }} \lambda_{s \tilde{g}(i)}
\overline{\lambda}_{\tilde{g}(j)  t} \psi (E_{st})\\
&=& a_{ij} \sum_{\substack{s  \rho  \tilde{g}(i) \\
\tilde{g}(j)  \rho  t }} \ \sum_{\substack{ p
\rho  \tilde{h}(s) \\ \tilde{h}(t)   \rho q
}}b_{st} \lambda_{s  \tilde{g}(i)}
\overline{\lambda}_{\tilde{g}(j) t } \mu_{p \tilde{h}(s)}
\overline{\mu}_{\tilde{h}(t)  q}E_{pq}\\
&=& a_{ij} \sum_{\substack{p  \rho
\tilde{h}\tilde{g}(i) \\ \tilde{h}\tilde{g}(j)  \rho q
}} \  \ \sum_{\substack{ \tilde{h}^{-1}(p)  \rho  s
\rho  \tilde{g}(i) \\ \tilde{g}(j)  \rho  t
 \rho  \tilde{h}^{-1}( q) }}b_{s\tilde{g}(i)}
b_{\tilde{g}(i)\tilde{g}(j)} b_{\tilde{g}(j)t}\lambda_{s
\tilde{g}(i)} \overline{\lambda}_{\tilde{g}(j) t } \mu_{p
\tilde{h}(s)} \overline{\mu}_{\tilde{h}(t)  q}E_{pq}\\
&=& a_{ij}b_{\tilde{g}(i)\tilde{g}(j)} \sum_{\substack{ p
\rho  \tilde{h}\tilde{g}(i) \\ \tilde{h}\tilde{g}(j)
\rho  q}}  \ \ (\sum_{\tilde{h}^{-1}(p) \rho
s  \rho  \tilde{g}(i)} \mu_{p \tilde{h}(s)} b_{s
\tilde{g}(i)} \lambda_{s \tilde{g}(i)})(\sum_{\tilde{g}(j)
\rho  t  \rho  \tilde{h}^{-1}(q)}
b_{\tilde{g}(j)t} \overline{\lambda}_{\tilde{g}(j) t}
\overline{\mu}_{\tilde{h}(t)q})E_{pq}.\eea
 But the first bracket in
the last row above is the product of the $p$-th row in $B^h$ and
the $i$-th column in $((b_{ij})_{i  \rho  j } \cdot A)^g$, thus it
is the $(p,i)$-position in the matrix $B^h((b_{ij})_{i
 \rho  j} \cdot A)^g$, or equivalently, the
$(p,\widetilde{hg}(i))$-position in
$$(B^h((b_{ij})_{i  \rho  j} \cdot A)^g)^{(hg)^{-1}}=
B^h((b_{ij})_{i  \rho  j} \cdot A)^{h^{-1}}=B \ \
^{h^{-1}}((b_{ij})_{i  \rho  j} \cdot A)^{h^{-1}}.$$

Denote $B \ \ ^{h^{-1}}((b_{ij})_{i  \rho  j} \cdot
A)^{h^{-1}}=(\nu_{ij})_{i,j}$, and the inverse of this matrix by
$(\overline{\nu}_{ij})_{i,j}$. A similar argument shows that the
second bracket above is just
$\overline{\nu}_{\tilde{h}\tilde{g}(j)q}$. Then
$$\psi \varphi (E_{ij})= a_{ij} b_{\tilde{g}(i)\tilde{g}(j)} \sum_{\substack{ p  \rho  \tilde{h}\tilde{g}(i) \\
 \tilde{h}\tilde{g}(j)  \rho  q }} \nu_{p \tilde{h}\tilde{g}(i)} \overline{\nu}_{\tilde{h}\tilde{g}(j)q}E_{pq},$$
showing  that $\psi \varphi= F(B \ \ ^{h^{-1}}((b_{ij})_{i
\rho  j} \cdot A)^{h^{-1}} \rtimes (hg \ltimes
(b_{\tilde{g}(i)\tilde{g}(j)}a_{ij})_{i  \rho  j}) )$,
which using (\ref{mult}) is just (\ref{eqmorph}).

Now $z=A \rtimes (g \ltimes (a_{ij})_{i  \rho  j})$ is
in the kernel of $F$ if and only if it is equivalent (via
$\approx$) to the identity element. In view of Theorem
\ref{theoremiso}, this is the same with $z \in D$.
\end{proof}

Now we explain how the description of the automorphism group of
$M(\rho, k)$ given in \cite{coelho} can be deduced from Theorem
\ref{teoremaauto}. We recall a few basic things about semidirect
products. We first note that if a group $G$ acts to the right on a
group $A$, with action denoted by $a\cdot g$ for $a\in A$ and
$g\in G$, then there is also a left action of $G$ on $A$, defined
by $g\cdot a=a\cdot g^{-1}$, and the associated right and left
semidirect products are isomorphic. Indeed, $\phi:A \rtimes
G\rightarrow G\ltimes A$, $\phi(a\rtimes g)=g\ltimes (g^{-1}\cdot
a)$, is a group isomorphism, with inverse $\phi^{-1}(g\ltimes
a)=a\cdot g^{-1}\rtimes g$. Secondly, if we have a double
semidirect product $B\rtimes (A\rtimes G)$, then the left action
of $A\rtimes G$ on $B$ induces (via the usual embeddings of $A$
and $G$) left actions of $A$ and $G$ on $B$. Moreover, $G$ acts to
the left on the group $B\rtimes A$ by $g\cdot (x\rtimes a)=g\cdot
x\rtimes g\cdot a$ for any $g\in G, x\in B, a\in A$, and the map
$\gamma: B\rtimes (A\rtimes G)\ra (B\rtimes A)\rtimes G$,
$\gamma(x\rtimes (a\rtimes g))=(x\rtimes a)\rtimes g$, is an
isomorphism of groups. Finally, if $A\rtimes G$ is a left
semidirect product, and $N$ is a normal subgroup of $A$ which is
invariant to the action of $G$, then $N\rtimes 1$ is a normal
subgroup of $A\rtimes G$ and $\frac{A\rtimes G}{N\rtimes G}\simeq
\frac{A}{N}\rtimes G$, where the action of $G$ on $\frac{A}{N}$ is
the one induced by the action of $G$ on $A$.

Using these remarks, we see that \bea U(M(\rho,k)) \rtimes ({\rm
Aut}_0(\mathcal{C}) \ltimes \mathcal{T})&\simeq &U(M(\rho,k))
\rtimes (\mathcal{T} \rtimes {\rm
Aut}_0(\mathcal{C}))\\
&\simeq& (U(M(\rho,k)) \rtimes \mathcal{T}) \rtimes {\rm
Aut}_0(\mathcal{C})\eea and in the third double semidirect product
the action of ${\rm Aut}_0(\mathcal{C})$ on $U(M(\rho,k)) \rtimes
\mathcal{T}$ is given by
$$g\cdot (A\rtimes (a_{ij})_{i  \rho  j})= \, ^{g^{-1}}A^{g^{-1}}
\rtimes (a_{ij})_{i  \rho  j}\cdot g^{-1}.$$ The image of $D$
through these isomorphisms in $(U(M(\rho,k)) \rtimes \mathcal{T})
\rtimes {\rm Aut}_0(\mathcal{C})$ is $D_0\rtimes 1$, where
$$D_0=\{ {\rm diag}(d_1, \ldots,  d_n) \rtimes  (d^{-1}_i
d_j)_{i  \rho  j} \ | \ d_1,  \ldots,  d_n \in k^* \}.$$ Clearly
$D_0$ is a normal subgroup in $U(M(\rho,k)) \rtimes \mathcal{T}$,
since $D$ is a normal subgroup of $U(M(\rho,k)) \rtimes ({\rm
Aut}_0(\mathcal{C}) \ltimes \mathcal{T})$, and it is easy to check
that $D_0$ is invariant under the action of ${\rm
Aut}_0(\mathcal{C})$. Thus we obtain that \bea \frac{U(M(\rho,k))
\rtimes ({\rm Aut}_0(\mathcal{C}) \ltimes \mathcal{T})}{D}&\simeq&
\frac{(U(M(\rho,k)) \rtimes \mathcal{T}) \rtimes {\rm
Aut}_0(\mathcal{C})}{D_0\rtimes 1}\\
&\simeq& \frac{U(M(\rho,k)) \rtimes \mathcal{T}}{D_0}\rtimes {\rm
Aut}_0(\mathcal{C}).\eea

Following \cite{coelho}, we consider an undirected graph $\Delta$,
whose vertices are all elements $i\in \{ 1,\ldots,n\}$ such that
the equivalence class $\hat{i}$ is not an isolated point in the
poset $(\mathcal{C},\leq )$; we say that $\alpha \in \mathcal{C}$
is an isolated point if any of $\alpha\leq \beta$ and $\beta\leq
\alpha$ implies that $\beta=\alpha$.  If $i$ and $j$ are vertices
of $\Delta$, there is an edge connecting $i$ and $j$ if and only
if $\hat{i}$ and $\hat{j}$ are not equal, but they are in relation
$\leq$ (in any possible way). If $\Delta_1,\ldots ,\Delta_z$ are
the connected components of $\Delta$, choose a tree $T_\ell$ in
each $\Delta_\ell$ (note that there are several possible such
choices). Now let $\mathcal{G}$ be the subgroup of $\mathcal{T}$
consisting of all $(a_{ij})_{i  \rho j}$'s for which $a_{ij}=1$
whenever $i\rho j$ and $i$, $j$ are vertices of $\Delta$, joined
by an edge of some $T_\ell$, or when both $i$ and
$j$ lie in an isolated equivalence class in $\mathcal{C}$.\\
Also, let $\mathcal{I}$ be the group of inner automorphisms of
$M(\rho, k)$, i.e. $\mathcal{I}=\{ C_A\, |\, A\in U(M(\rho
,k))\}$, where $C_A(X)=AXA^{-1}$ for any $X\in M(\rho ,k)$. Then
there is a left action of $\mathcal{G}$ on $\mathcal{I}$ given by
$(a_{ij})_{i  \rho j}\cdot C_A=C_{(a_{ij})_{i  \rho j}\cdot A}$.
We prove that
\begin{equation}\label{eqisofactor}
\frac{U(M(\rho,k)) \rtimes \mathcal{T}}{D_0}\simeq
\mathcal{I}\rtimes \mathcal{G}
\end{equation}
and this will show that
$$Aut(M(\rho,k))\simeq \frac{U(M(\rho,k)) \rtimes ({\rm
Aut}_0(\mathcal{C}) \ltimes \mathcal{T})}{D} \simeq
(\mathcal{I}\rtimes \mathcal{G})\rtimes {\rm
Aut}_0(\mathcal{C}),$$ thus recovering the description of the
automorphism group of the structural matrix algebra given in
\cite{coelho}.

For proving (\ref{eqisofactor}), define
$$\Psi:\mathcal{I}\rtimes \mathcal{G}\ra \frac{U(M(\rho,k)) \rtimes
\mathcal{T}}{D_0},\; \Psi (C_A\rtimes (a_{ij})_{i  \rho
j})=\overline{A\rtimes (a_{ij})_{i  \rho j}},$$ where
$\overline{y}$ denotes the class of $y$ modulo $D_0$. We note that
$\Psi$ is well defined since $C_A=C_B$ if and only if $A^{-1}B$
lies in the center of $M(\rho, k)$, and this is the set of
diagonal matrices constant on all $i$th diagonal positions with
$\hat{i}$ in the same connected component of $\mathcal{C}$. Then
$B=A{\rm diag}(d_1,\ldots,d_n)$ for such a central diagonal matrix
${\rm diag}(d_1,\ldots,d_n)$. Since $d_i^{-1}d_j=1$ for any $i\rho
j$, we have that ${\rm diag}(d_1,\ldots,d_n)\rtimes (1)_{i\rho
j}\in D_0$. But $B\rtimes (a_{ij})_{i  \rho j}=({\rm
diag}(d_1,\ldots,d_n)\rtimes (1)_{i\rho j})(A\rtimes (a_{ij})_{i
\rho j})$, so $\overline{B\rtimes (a_{ij})_{i  \rho
j}}=\overline{A\rtimes (a_{ij})_{i  \rho j}}$.

We show that $\Psi$ is injective. Indeed, if $\Psi (C_A\rtimes
(a_{ij})_{i  \rho j})$ is trivial, then $A={\rm diag}(d_1,\ldots
,d_n)$ and $a_{ij}=d_i^{-1}d_j$ for any $i\rho j$, where
$d_1,\ldots,d_n$ are some non-zero scalars. Since $a_{ij}=1$ for
any $i,j$ joined by an edge of some $T_\ell$, and $T_\ell$ is a
tree, we see that $d_i$ must be the same when $i$ runs through the
vertices of a fixed $\Delta_\ell$. Also, $a_{ij}=1$ for $i,j$ in
the same isolated equivalence class $\alpha\in \mathcal{C}$, so
$d_i$ is constant for $i$ in such a class. We conclude that $A$ is
a central element, so $C_A$ is the identity, and all $a_{ij}$'s
with $i\rho j$ are equal to 1.

To prove that $\Psi$ is surjective, it is enough to show that for
any $A\rtimes (a_{ij})_{i  \rho j} \in U(M(\rho,k)) \rtimes
\mathcal{T}$ there exists $B\rtimes (b_{ij})_{i  \rho j}\in
\mathcal{I}\rtimes \mathcal{G}$ with $\overline{A\rtimes
(a_{ij})_{i  \rho j}}=\overline{B\rtimes (b_{ij})_{i  \rho j}}$.
We first show that there are $d_1,\ldots ,d_n\in k^*$ such that
$a_{ij}=d_id_j^{-1}$ for any $i,j$ joined by an edge of some
$T_\ell$, and also for any $i,j$ in the same isolated equivalence
class. Indeed, for the tree $T_{\ell}$ we can fix some vertex
$i_0$, set $d_{i_0}=1$, and then define $d_i$ for any other vertex
in $T_\ell$ by induction on the distance from $i_0$ to $i$ in the
tree $T_\ell$, using at each step the desired condition
$a_{ij}=d_id_j^{-1}$. For an isolated equivalence class, say $\{
i_1,\ldots ,i_p\}$, set $d_{i_p}=1$, and $a_{i_1,i_p}=d_{i_1},
\ldots ,a_{i_{p-1},i_p}=d_{i_{p-1}}$. Then
$a_{i_r,i_s}=a_{i_r,i_p}a_{i_p,i_s}=a_{i_r,i_p}a_{i_s,i_p}^{-1}=d_{i_r}d_{i_s}^{-1}$.
Now $$({\rm diag}(d_1, \ldots,  d_n) \rtimes  (d^{-1}_i d_j)_{i
\rho  j})(A\rtimes (a_{ij})_{i  \rho j})={\rm diag}(d_1, \ldots,
d_n)((d^{-1}_i d_j)_{i \rho  j}\cdot A)\rtimes
(d_i^{-1}d_ja_{ij})_{i\rho j}$$ and $(d_i^{-1}d_ja_{ij})_{i\rho
j}\in \mathcal{G}$, so we are done.

\section{Graded flags and associated gradings of
matrices}\label{sectiongradedflags}

Let $G$ be a group. A $G$-graded vector space is a vector space
$V$ with a decomposition $V=\displaystyle \bigoplus_{g \in G}V_g$,
where each $V_g$ is a subspace. The elements of $\displaystyle
\bigcup_{g \in G}V_g$ are called the homogeneous elements of $V$.
Each $v \in V$ is uniquely written as $v=\displaystyle \sum_{g \in
G}v_g$, $v_g \in V_g$.

\begin{definition}
A $G$-graded $\rho$-flag is a $\rho$-flag $(V,(V_{\alpha})_{\alpha
\in \mathcal{C}})$ such that $V$ is a $G$-graded vector space, and
the basis $B$ from Definition \ref{de} consists of homogeneous
elements.\\
If $\mathcal{F}=(V,(V_{\alpha})_{\alpha \in \mathcal{C}})$ and
$\mathcal{F}'=(V',(V'_{\alpha})_{\alpha \in \mathcal{C}})$ are
$G$-graded $\rho$-flags, then a morphism of graded flags from
$\mathcal{F}$ to $\mathcal{F}'$ is a morphism $f:V\ra V'$ of
$\rho$-flags, which is also a morphism of graded vector spaces.
\end{definition}

Note that in a graded flag any $V_{\alpha}$ is a graded vector
subspace, as it has a basis of homogeneous elements.

If $\mathcal{F}=(V, (V_{\alpha})_{\alpha \in \mathcal{C}})$ is a
$G$-graded $\rho$-flag and $\sigma \in G$, define
$$ {\rm End}(\mathcal{F})_{\sigma}=\{ f \in {\rm End}(\mathcal{F}) \ | \ f(V_g) \subseteq V_{\sigma g} \ \text{for any } g \in G \}.$$

\begin{proposition} \label{defEND}
${\rm End}(\mathcal{F})=\displaystyle \bigoplus_{\sigma \in
G}{\rm End}(\mathcal{F})_{\sigma}$, and this decomposition makes
${\rm End}(\mathcal{F})$  a $G$-graded algebra.
\end{proposition}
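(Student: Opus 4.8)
The plan is to show three things: first, that each $f \in {\rm End}(\mathcal{F})$ decomposes as a sum of homogeneous components lying in the ${\rm End}(\mathcal{F})_\sigma$; second, that this sum is direct; and third, that the grading is compatible with multiplication, i.e. ${\rm End}(\mathcal{F})_\sigma \cdot {\rm End}(\mathcal{F})_\tau \subseteq {\rm End}(\mathcal{F})_{\sigma\tau}$. The last point is essentially immediate from the definition: if $f(V_g) \subseteq V_{\sigma g}$ and $f'(V_g) \subseteq V_{\tau g}$ for all $g$, then $(f f')(V_g) \subseteq f(V_{\tau g}) \subseteq V_{\sigma\tau g}$, so $f f' \in {\rm End}(\mathcal{F})_{\sigma\tau}$; one should also note $1_V \in {\rm End}(\mathcal{F})_e$.

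For the decomposition, I would start from an arbitrary $f \in {\rm End}(\mathcal{F})$ and use the homogeneous basis $B = (v_i)_{1\le i\le n}$ of $V$ given in Definition \ref{de}, with $v_i$ of some degree $\deg(v_i) \in G$. Writing $f(v_i) = \sum_j c_{ji} v_j$ with scalars $c_{ji} \in k$, I define, for each $\sigma \in G$, a linear map $f_\sigma$ by $f_\sigma(v_i) = \sum_{j:\, \deg(v_j) = \sigma\deg(v_i)} c_{ji} v_j$. By construction $f_\sigma(V_g) \subseteq V_{\sigma g}$ on homogeneous basis vectors (hence on all of $V_g$ by linearity), and $\sum_{\sigma} f_\sigma = f$ with only finitely many $f_\sigma$ nonzero. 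The point requiring a little care is that $f_\sigma$ actually lies in ${\rm End}(\mathcal{F})$, not merely in ${\rm End}(V)$ --- that is, $f_\sigma(V_\alpha) \subseteq V_\alpha$ for every $\alpha \in \mathcal{C}$. This follows because $V_\alpha$ is spanned by $\{v_i : \hat{i} \le \alpha\}$, and $f(v_i) \in V_\alpha$ means every $v_j$ appearing in $f(v_i)$ with nonzero coefficient satisfies $\hat{j} \le \alpha$; restricting this sum to those $j$ with a prescribed degree only discards terms, so the result still lies in $V_\alpha$. Thus $f_\sigma(v_i) \in V_\alpha$ whenever $v_i \in V_\alpha$, giving $f_\sigma(V_\alpha) \subseteq V_\alpha$, and hence $f_\sigma \in {\rm End}(\mathcal{F})_\sigma$.

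For directness of the sum, suppose $\sum_{\sigma \in S} f_\sigma = 0$ for a finite set $S \subseteq G$ with $f_\sigma \in {\rm End}(\mathcal{F})_\sigma$. Apply this to a homogeneous element $v \in V_g$: then $f_\sigma(v) \in V_{\sigma g}$, and since the subspaces $V_{\sigma g}$ for distinct $\sigma$ are in direct sum inside $V = \bigoplus_{h} V_h$ (the values $\sigma g$ being pairwise distinct as $\sigma$ ranges over $S$), each $f_\sigma(v) = 0$. As homogeneous elements span $V$, each $f_\sigma = 0$. This establishes ${\rm End}(\mathcal{F}) = \bigoplus_{\sigma} {\rm End}(\mathcal{F})_\sigma$ as a direct sum of subspaces, and combined with the multiplicativity above it is a $G$-graded algebra.

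I do not expect a genuine obstacle here; the only subtlety worth flagging explicitly in the writeup is the verification that the homogeneous truncation $f_\sigma$ preserves each $V_\alpha$, since that is where the interaction between the grading and the flag structure actually occurs --- everything else is the standard argument that an endomorphism ring of a graded object carries the induced grading. I would phrase the proof so that this compatibility point is the one sentence that gets the most attention, and treat the direct-sum and multiplicativity claims as quick consequences.
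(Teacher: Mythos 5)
Your proof is correct and follows essentially the same route as the paper: you form the homogeneous components $f_\sigma$ (the paper defines them abstractly by $f_\sigma(v_g)=f(v)_{\sigma g}$, you do it via the homogeneous basis, which is the same map), verify they preserve each $V_\alpha$ using that $V_\alpha$ has a homogeneous basis, get directness by evaluating on homogeneous elements, and note multiplicativity is immediate. No gaps; your emphasis on the compatibility of the truncation with the flag subspaces is exactly the point the paper's proof also singles out.
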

\begin{proof}
It is clear that ${\rm End}(\mathcal{F})_{\sigma}$ can be non-zero
only for $\sigma \in ({\rm supp} \ V) \cdot ({\rm supp} \
V)^{-1}$, where ${\rm supp} \ V= \{ g \in G \ | \ V_g \neq 0 \}$.
Thus only finitely many ${\rm End}(\mathcal{F})_{\sigma}$ are
nonzero.

In order to see that $\displaystyle \sum_{\sigma \in G} {\rm
End}(\mathcal{F})_{\sigma}$ is a direct sum, choose some
$f^{\sigma} \in {\rm End}(\mathcal{F})_{\sigma}$ for each $\sigma
\in G$. If $\displaystyle \sum_{\sigma \in G}f^{\sigma}=0$, then
$0= ( \displaystyle \sum_{\sigma \in G}f^{\sigma} ) ( V_g
)=\displaystyle \sum_{\sigma \in G}f^{\sigma}(V_{g})$ for any $g
\in G$. Since $f^{\sigma}(V_g) \subset V_{\sigma g}$, this shows
that $f^{\sigma}(V_g)=0$ for any $\sigma$ and any $g$, and we get
that $f^{\sigma}=0$ for any $\sigma$.

Now let $f \in {\rm End}(\mathcal{F})$. For any $\sigma \in G$ define
the linear maps $f_{\sigma}:V \xrightarrow{} V$ such that
$(f_{\sigma})(v_g)=f(v)_{\sigma g}$ for any $g \in G$, $v_g \in
V_g$. Then
$$f_{\sigma}(V_{\alpha})=\sum_{g \in G}f_{\sigma}((V_{\alpha})_g) \subset \sum_{g \in G}f(V_{\alpha})_{\sigma g} \subset V_{\alpha}$$
for any $\alpha \in \mathcal{C}$, so $f_{\sigma} \in
{\rm End}(\mathcal{F})_{\sigma}$. Moreover, it is clear that
$f=\displaystyle \sum_{\sigma \in G}f_{\sigma}$, so
${\rm End}(\mathcal{F})= \displaystyle \bigoplus_{\sigma \in
G}{\rm End}(\mathcal{F})_{\sigma}$.

Obviously, ${\rm End}(\mathcal{F})_{\sigma}{\rm End}(\mathcal{F})_{\tau}
\subset {\rm End}(\mathcal{F})_{\sigma \tau}$ for any $\sigma , \tau \in
G$, so ${\rm End}(\mathcal{F})$ is a $G$-graded algebra.
\end{proof}

We will denote by ${\rm END}(\mathcal{F})$ the algebra ${\rm
{\rm End}}(\mathcal{F})$, regarded with the $G$-grading defined in
Proposition \ref{defEND}. This grading transfers via the
isomorphism defined in the proof of Proposition \ref{isoEndMat} to
a $G$-grading on the structural matrix algebra $M(\rho,k)$. If
$g_1,\ldots, g_n$ are the degrees of the basis elements
$v_1,\ldots,v_n$, then each matrix unit $e_{ij}$ with $i\rho j$ is
a homogeneous element of degree $g_ig_j^{-1}$ in this grading.

\begin{definition}
A $G$-grading on the algebra $M(\rho,k)$ is called a good grading
if $e_{ij}$ is a homogeneous element for any $i,j$ with $i \rho
j$.
\end{definition}

Gradings on $M(\rho,k)$ arising from graded flags are good
gradings. We note that giving a good $G$-grading on $M(\rho,k)$ is
equivalent to  giving a family $(u_{ij})_{i \rho  j}$ of elements
of $G$ such that $u_{ij}u_{jr}=u_{ir}$ for any $i,j,r$ with $i
\rho  j$ and $ j  \rho  r$. If we regard this family as a function
$u:\rho \ra G$, defined by $u(i,j)=u_{ij}$ for any $i,j$ with
$i\rho j$, then $u$ is just a transitive function on $\rho$ with
values in $G$, in the terminology of \cite{nowicki}.

Examples of a transitive functions on $\rho$ can be obtained as
follows. Let $g_1,\ldots,g_n\in G$, and let $u_{ij}=g^{}_i
g^{-1}_j$ for any $i,j$ with $i  \rho j$. Then $(u_{ij})_{i \rho
j}$ is a transitive function on $\rho$. A transitive function on
$\rho$ is called trivial if it is obtained in this way.
 Clearly, the good $G$-gradings corresponding to trivial transitive
functions on $\rho$ are precisely the gradings obtained from
graded flags as above. It is an interesting question whether all
good gradings arise from graded flags, or equivalently\\

{\bf Question.} {\it Let $\rho$ be a preorder relation. Is it true
that for any group $G$ all transitive functions $u:\rho\ra G$ are
trivial?}\\

Several variations of the above questions can be formulated, for
example to determine for a fixed $\rho$ all groups $G$ such that
any transitive function on $\rho$ with values in $G$ is trivial.

The following shows that the problem posed in the question above
reduces to answering it for the associated poset
$(\mathcal{C},\leq )$. We also give an equivalent formulation
involving the associated graph $\Gamma$. The equivalence between
(1) and (2) in the next Proposition was proved in \cite{nowicki}.

\begin{proposition} \label{prop*}
Let $G$ be a group. The following are equivalent:\\
{\rm (1)} Any transitive function $u:\rho \ra G$ is trivial.\\
{\rm (2)} Any transitive function $w:\,\leq \, \ra G$ is trivial,
where
$\leq$ is the partial order on $\mathcal{C}$. \\
{\rm (3)} For any function $v : \Gamma_1 \ra G$ such that $v(a_1)
\ldots v(a_r)= v(b_1) \ldots v(b_s) $ for any paths $a_1 \ldots
a_r$ and $b_1 \ldots b_s$ in $\Gamma$ with $s(a_1)=s(b_1)$ and
$t(a_r)=t(b_s)$, there exists a function $f : \Gamma_0 \ra G$ such
that $v(a)=f(s(a))f(t(a))^{-1}$ for any $a \in \Gamma_1$.
 \end{proposition}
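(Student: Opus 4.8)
The plan is to establish a chain of equivalences, proving $(1)\Leftrightarrow(2)$ and $(2)\Leftrightarrow(3)$, with $(2)\Rightarrow(1)$ and $(3)\Rightarrow(2)$ being the substantive directions.

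First I would handle $(1)\Leftrightarrow(2)$. The direction $(1)\Rightarrow(2)$ is essentially trivial: given a transitive $w:\,\leq\,\ra G$, pull it back to $u:\rho\ra G$ by $u(i,j)=w(\hat{i},\hat{j})$; this is transitive since $\rho$ is compatible with $\sim$, so by (1) there are $g_1,\dots,g_n$ with $u(i,j)=g_ig_j^{-1}$, and one checks that $g_i$ depends only on $\hat{i}$ (because $u(i,j)=1$ whenever $i\sim j$ forces $g_ig_j^{-1}=1$), yielding a trivialization of $w$. For $(2)\Rightarrow(1)$, given transitive $u:\rho\ra G$, I would first note that for $i\sim j$, transitivity gives $u(i,j)u(j,i)=u(i,i)=1_G$ and $u(i,j)u(j,i')=u(i,i')$; the idea is to pick, for each class $\alpha\in\mathcal{C}$, a representative $i_\alpha$, and define $w:\,\leq\,\ra G$ by $w(\alpha,\beta)=u(i_\alpha,i_\beta)$. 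This is transitive, so by (2) it trivializes as $w(\alpha,\beta)=h_\alpha h_\beta^{-1}$. Then set $g_i=u(i,i_{\hat{i}})^{-1}h_{\hat{i}}$ (or a similar expression) and verify via transitivity of $u$ that $u(i,j)=g_ig_j^{-1}$ for all $i\rho j$; this bookkeeping with the $\sim$-relation is the main mild obstacle in this part.

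Next, $(3)\Rightarrow(2)$: given transitive $w:\,\leq\,\ra G$, define $v:\Gamma_1\ra G$ by $v(a)=w(s(a),t(a))$ for each arrow $a$. The hypothesis of (3) — equality of label products along parallel paths — follows from transitivity of $w$ together with the fact (stated before Section \ref{sectionlattice}) that a path in $\Gamma$ from $\alpha$ to $\beta$ exists iff $\alpha\leq\beta$, and that $w(\alpha,\gamma)=w(\alpha,\beta)w(\beta,\gamma)$ telescopes any such path. So (3) supplies $f:\Gamma_0\ra G$ with $v(a)=f(s(a))f(t(a))^{-1}$; I must then check $w(\alpha,\beta)=f(\alpha)f(\beta)^{-1}$ for \emph{all} $\alpha\leq\beta$, not just covering relations, which again follows by induction on the length of a path from $\alpha$ to $\beta$ (if $\alpha=\beta$ the path has length zero and $w(\alpha,\alpha)=1=f(\alpha)f(\alpha)^{-1}$).

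The reverse direction $(2)\Rightarrow(3)$ is where I expect the real work. Given $v:\Gamma_1\ra G$ with the parallel-path condition, I want to build a transitive $w:\,\leq\,\ra G$. For $\alpha\leq\beta$, choose \emph{any} path $a_1\cdots a_r$ from $\alpha$ to $\beta$ in $\Gamma$ and set $w(\alpha,\beta)=v(a_1)\cdots v(a_r)$ (and $w(\alpha,\alpha)=1_G$); the parallel-path hypothesis makes this independent of the chosen path — this is precisely the point where that hypothesis is used and is the crux of the argument. Concatenation of paths then gives transitivity $w(\alpha,\gamma)=w(\alpha,\beta)w(\beta,\gamma)$. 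By (2), $w$ is trivial, so there is $f:\mathcal{C}=\Gamma_0\ra G$ with $w(\alpha,\beta)=f(\alpha)f(\beta)^{-1}$ for all $\alpha\leq\beta$; restricting to arrows $a\in\Gamma_1$ (which are length-one paths, hence $w(s(a),t(a))=v(a)$) gives $v(a)=f(s(a))f(t(a))^{-1}$, as required. The only subtlety to watch is that $\Gamma$ may be disconnected and $\leq$ need not be connected as a relation, but since $w$ is only required to be defined on comparable pairs and the trivialization $f$ can be chosen independently on each connected component, there is no obstruction; I would remark on this explicitly to close the argument cleanly.
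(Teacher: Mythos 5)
Your proposal is correct and takes essentially the same route as the paper: the paper proves the cycle $(1)\Rightarrow(2)\Rightarrow(3)\Rightarrow(1)$, and your argument merely factors the paper's $(3)\Rightarrow(1)$ through the poset level (your $(3)\Rightarrow(2)$ plus $(2)\Rightarrow(1)$), using the very same constructions — representatives $i_\alpha$, the path-product definition of $w$ on $\leq$, and a correction factor built from $u(i,i_{\hat{i}})$. The only adjustment needed is in your $(2)\Rightarrow(1)$: the correct choice is $g_i=u(i,i_{\hat{i}})\,h_{\hat{i}}$ (not $u(i,i_{\hat{i}})^{-1}h_{\hat{i}}$), exactly as in the paper's $(3)\Rightarrow(1)$, after which $u(i,j)=u(i,i_{\hat{i}})\,w(\hat{i},\hat{j})\,u(i_{\hat{j}},j)=g_ig_j^{-1}$ verifies as you indicated.
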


\begin{proof} Denote $\mathcal{C}=\{ \alpha_1 , ...,
\alpha_h \}$ and pick $i_1 \in \alpha_1, ..., i_h \in \alpha_h$.\\
(1)$\Rightarrow$(2) Let $w:\,\leq\, \ra G$ be a transitive
function. Then $u:\rho \ra G$ defined by
$u(i,j)=w(\hat{i},\hat{j})$ for any $i,j$ with $i\rho j$, is a
transitive function, so there exists a function $f:\{ 1,\ldots
,n\}\ra G$ such that $u(i,j)=f(i)f(j)^{-1}$ for any $i,j$ with
$i\rho j$. Now define $\overline{f}:\mathcal{C}\ra G$,
$\overline{f}(\alpha_q)=f(i_q)$ for any $1\leq q\leq h$. Then if
$\alpha_q\leq \alpha_p$, we have
$w(\alpha_q,\alpha_p)=u(i_q,i_p)=f(i_q)f(i_p)^{-1}=\overline{f}(\alpha_q)\overline{f}(\alpha_p)^{-1}$,
so $w$ is trivial.\\
(2)$\Rightarrow$(3) Let $v : \Gamma_1 \ra G$ be as in (3). Define
$w:\,\leq\, \ra G$ as follows. If $\alpha<\beta$, let $a_1\ldots
a_r$ be a path in $\Gamma$ starting at $\alpha$ and ending at
$\beta$; we define $w(\alpha,\beta)=v(a_1)\ldots v(a_r)$, and we
note that the definition does not depend on the path, taking into
account the property satisfied by $v$. We also define
$w(\alpha,\alpha)=e$, the neutral element of $G$, for any $\alpha
\in \mathcal{C}$. Then $w$ is a transitive function, so there
exists $f : \mathcal{C} \ra G$ such that
$w(\alpha,\beta)=f(\alpha)f(\beta)^{-1}$ for any $\alpha\leq
\beta$. In particular $v(a)=f(s(a))f(t(a))^{-1}$ for any $a \in
\Gamma_1$.\\
(3)$\Rightarrow$(1) Let $u:\rho \ra G$ be a transitive function.
 Define $v : \Gamma_1 \xrightarrow{} G$ as follows: if $a \in
\Gamma_1$ with $s(a)=\alpha_p$, $t(a)=\alpha_q$, then $v(a)=u(i_p,
i_q)$. If $a_1\ldots a_r$ and $b_1\ldots b_s$ are paths in
$\Gamma$ with $s(a_1)=s(b_1)=\alpha_p$ and
$t(a_r)=t(b_s)=\alpha_q$, it is clear that $v(a_1)\ldots v(a_r)=
v(b_1)\ldots v(b_s)=u(i_p, i_q)$. Then there exist $z_1, ..., z_n
\in G$ such that $v(a)=z_p z_q^{-1}$ for any arrow $a$ with
$s(a)=\alpha_p$, $t(a)=\alpha_q$, $1 \leq p,q \leq h$.

Now define the family $(g_i )_{1 \leq i \leq n }$ of elements of
$G$ by $g_i = u(i, i_p)z_p$ for any $i$, where $p$ is such that $i
\in \alpha_p$ (note that $u(i, i_p)$ makes sense since $i, i_p \in
\alpha_p$). Then if $i$ and $j$ are such that $i  \rho   j$, let
$i \in \alpha_p$, $j \in \alpha_q$. We know that $g_i=u(i,
i_p)z_p$, $g_j=u(j, i_q)z_q$ and $u(i_p, i_q)=z_pz_q^{-1}$. Then
\bea u(i,j)&=&u(i, i_p)u(i_p, i_q) u(i_q, j)\\
&=&g_i z_p^{-1} z_p z_q^{-1} u(j, i_q)^{-1}\\
&=&g_i z_q^{-1} z_q g_j^{-1}\\
&=& g_i g_j^{-1}. \eea
\end{proof}

The next result gives an easy way to check whether a function $v$
as in Proposition \ref{prop*} (3) arises from a function
$f:\Gamma_0\ra G$, by looking at the cycles of the undirected
graph associated with $\Gamma$. We consider the graph
$\tilde{\Gamma}$, constructed from $\Gamma$ by 'doubling the
arrows'. It has the same  vertices as $\Gamma$ thus
$\tilde{\Gamma}_0=\Gamma_0 $. For any arrow $a \in \Gamma_1$ from
$\alpha$ to $\beta$, we consider an arrow $\tilde{a}$ from $\beta$
to $\alpha$, and define $\tilde{\Gamma}_1=\Gamma_1 \cup \{
\tilde{a} \ | \ a \in \Gamma_1 \}$. If $G$ is a group and $v :
\Gamma_1 \ra G$ is a function, we denote by
$\tilde{v}:\tilde{\Gamma}_1 \ra G$ the function whose restriction
to $\Gamma_1$ is $v$, and such that
$\tilde{v}(\tilde{a})=v(a)^{-1}$ for any $a \in \Gamma_1$.

\begin{proposition} \label{proptest}
Let $G$ be a group, and let $v : \Gamma_1 \xrightarrow{}G$ such
that $v(a_1) \ldots v(a_r)= v(b_1) \ldots v(b_s)$ for any paths
$a_1 \ldots a_r$ and $b_1 \ldots b_s$ in $\Gamma$ with
$s(a_1)=s(b_1)$ and $t(a_r)=t(b_s)$. Then the following are
equivalent.\\
{\rm (1)} There exists a function $f : \Gamma_0 \ra G$ such that
$v(a)=f(s(a))f(t(a))^{-1}$ for any $a \in \Gamma_1$.\\
{\rm (2)} For any cycle $z_1  ...  z_m$ in $\tilde{\Gamma}$, with
$z_1, ..., z_m \in \tilde{\Gamma}_1$ (this corresponds to a cycle
in the undirected graph $\Gamma^u$ associated with $\Gamma$) one
has $\tilde{v}(z_1)
 \ldots \tilde{v}(z_m)=1$.
\end{proposition}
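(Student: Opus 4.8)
The plan is to prove the two implications separately, with the easy direction being $(1)\Rightarrow(2)$ and the substantive direction being $(2)\Rightarrow(1)$.

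For $(1)\Rightarrow(2)$: given $f:\Gamma_0\ra G$ with $v(a)=f(s(a))f(t(a))^{-1}$ for every arrow $a$, I would first observe that the extended function $\tilde v$ then satisfies $\tilde v(z)=f(s(z))f(t(z))^{-1}$ for \emph{every} edge $z\in\tilde\Gamma_1$, where $s,t$ are the source/target maps of $\tilde\Gamma$; indeed, for $z=\tilde a$ with $a$ running from $\alpha$ to $\beta$ we have $s(\tilde a)=\beta$, $t(\tilde a)=\alpha$, so $f(s(\tilde a))f(t(\tilde a))^{-1}=f(\beta)f(\alpha)^{-1}=v(a)^{-1}=\tilde v(\tilde a)$. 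Then for any closed walk $z_1\ldots z_m$ in $\tilde\Gamma$ (with $t(z_\ell)=s(z_{\ell+1})$ and $t(z_m)=s(z_1)$), the product telescopes: $\tilde v(z_1)\ldots\tilde v(z_m)=f(s(z_1))f(t(z_1))^{-1}f(s(z_2))\cdots=f(s(z_1))f(s(z_1))^{-1}=1$.

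For $(2)\Rightarrow(1)$: the idea is to define $f$ by fixing a base vertex in each connected component of $\Gamma^u$ (equivalently of $\tilde\Gamma$) and propagating along walks. Concretely, pick a representative $\alpha_0$ in each connected component; set $f(\alpha_0)=e$; and for any vertex $\beta$ in that component choose a walk $w=z_1\ldots z_m$ in $\tilde\Gamma$ from $\alpha_0$ to $\beta$ and put $f(\beta)=\tilde v(z_1)\ldots\tilde v(z_m)$. The key point is that this is well-defined: if $w'$ is another walk from $\alpha_0$ to $\beta$, then $w$ followed by the reverse of $w'$ is a closed walk at $\alpha_0$, and using $\tilde v(\tilde z)=\tilde v(z)^{-1}$ (reversing an edge inverts its value) together with hypothesis (2), the product along $w$ equals the product along $w'$. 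Connectedness guarantees such a walk always exists; vertices in distinct components are handled independently. Finally I would check $v(a)=f(s(a))f(t(a))^{-1}$ for each $a\in\Gamma_1$: let $a$ go from $\alpha$ to $\beta$, take a walk $w$ from the base point $\alpha_0$ to $\alpha$; then $w$ followed by $a$ is a walk from $\alpha_0$ to $\beta$, so $f(\beta)=f(\alpha)\tilde v(a)=f(\alpha)v(a)$, giving $v(a)=f(\alpha)^{-1}f(\beta)$. A small care point: the statement writes $v(a)=f(s(a))f(t(a))^{-1}$, so I should instead orient things so that walks run \emph{from} the base point and use $\tilde v$ consistently; one gets $f(s(a))f(t(a))^{-1}=v(a)$ after choosing the convention $f(\beta)=\tilde v(\text{reverse walk from }\beta\text{ to }\alpha_0)$, i.e. propagating values of $\tilde v$ backwards. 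Either convention works provided it is applied uniformly; I will fix it at the outset.

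The only genuine obstacle is the well-definedness of $f$ in $(2)\Rightarrow(1)$, and it reduces exactly to the cycle condition in (2): two walks between the same endpoints differ by a closed walk, every closed walk decomposes (after cancelling backtracks $z\tilde z$, which contribute $\tilde v(z)\tilde v(z)^{-1}=1$) into cycles in the sense of (2), and reversing orientation of an edge inverts its $\tilde v$-value, so the ambiguity in $f(\beta)$ is a product of terms each equal to $1$. Note that the hypothesis on $v$ (equality of products along parallel directed paths) is not actually needed for this Proposition as stated — it only plays a role in Proposition \ref{prop*} — but it is harmless to keep, and indeed in the presence of that hypothesis condition (2) simplifies further since many cycles become automatically trivial. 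I would remark on this but not belabour it.
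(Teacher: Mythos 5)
Your proposal is correct and follows essentially the same route as the paper: the telescoping computation for $(1)\Rightarrow(2)$, and for $(2)\Rightarrow(1)$ a base vertex in each connected component with $f$ propagated along walks in $\tilde{\Gamma}$, well-definedness coming from the cycle condition (the paper, like your final convention, propagates along walks \emph{towards} the base point, which yields $v(a)=f(s(a))f(t(a))^{-1}$ directly). Your side remark that the standing hypothesis on $v$ is not needed for this particular equivalence is accurate but, as you say, inessential.
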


\begin{proof}
(1)$\Rightarrow$(2) For any $a\in \Gamma_1$ starting from $\alpha$
and ending at $\beta$, we have
$\tilde{v}(a)=v(a)=f(\alpha)f(\beta)^{-1}$ and
$\tilde{v}(\tilde{a})=f(\beta)f(\alpha)^{-1}$. Thus
$\tilde{v}(z)=f(s(z))f(t(z))^{-1}$ for any $z \in
\tilde{\Gamma}_1$. Now it is clear that $\tilde{v}(z_1) \ ... \
\tilde{v}(z_m)=1$ for any cycle $z_1 \ ... \ z_m$ in
$\tilde{\Gamma}$.\\
(2)$\Rightarrow$(1)
 We construct a function $f : \Gamma_0
\xrightarrow{} G$ satisfying the desired property.  It is clear
that we can reduce to the case where $\Gamma$ is connected (and
putting together the functions constructed for the connected
components). Choose some $\omega \in \Gamma_0$ and take $f(\omega)
$ be an arbitrary element of $G$. If $\alpha \in \Gamma_0$, let
$z_1\ldots z_{d}$ be a path from $\alpha$ to $\omega$ in
$\tilde{\Gamma}$. Define $f(\alpha)=\tilde{v}(z_1)\ldots
\tilde{v}(z_{d})f(\omega)$; this does not depend on the path $z_1
\ldots z_{d}$, since for another path $y_1 \ldots y_h$ from
$\alpha$ to $\omega$, $z_1 \ldots z_d y_h^{-1} \ldots y_1^{-1}$ is
a cycle in $\tilde{\Gamma}$, where $y^{-1}$ denotes the arrow
opposite to $y$ (thus $a^{-1}=\tilde{a}$, and $\tilde{a}^{-1}=a$
for any $a\in \Gamma_1$). It is clear that
$\tilde{v}(y^{-1})=\tilde{v}(y)^{-1}$ for any $y \in
\tilde{\Gamma}_1$. Then $\tilde{v}(z_1) \ldots \tilde{v}(z_d)
\tilde{v}(y_{h}^{-1}) \ldots \tilde{v}(y_1^{-1})=1$, so
$\tilde{v}(z_1) \ldots \tilde{v}(z_d)= \tilde{v}(y_1) \ldots
\tilde{v}(y_h)$. Now if $a \in \Gamma_1$ is an arrow from $\alpha$
to $\beta$, let $z_1 \ ... \ z_d$ be a path from $\beta$ to
$\omega$ in $\tilde{\Gamma}$. Then $f(\beta)=\tilde{v}(z_1)  \ ...
\ \tilde{v}(z_d)f(\omega)$, and
$f(\alpha)=\tilde{v}(a)\tilde{v}(z_1)\ ... \
\tilde{v}(z_d)f(\omega)=\tilde{v}(a)f(\beta)=v(a)f(\beta)$, so
$v(a)=f(\alpha)f(\beta)^{-1}$.
\end{proof}

Let $F(\Gamma)$ be the free group generated by the set $\Gamma_1$
of arrows of $\Gamma$. Let $A(\Gamma)$ be the subgroup of
$F(\Gamma)$ generated by all elements of the form $a_1\ldots
a_rb_p^{-1}\ldots b_1^{-1}$, where $a_1\ldots a_r$ and $b_1\ldots
b_p$ are two paths (in $\Gamma$) starting from the same vertex and
terminating at the same vertex. We also consider the subgroup
$B(\Gamma)$ of $F(\Gamma)$ generated by all elements of the form
$a_1a_2^{\varepsilon_2}\ldots a_m^{\varepsilon_m}$, where
$a_1,\ldots ,a_m$ are arrows forming in this order a cycle in the
undirected graph obtained from $\Gamma$ when omitting the
direction of arrows, and $\varepsilon_i=1$ if $a_i$ is in the
direction of the directed cycle given by $a_1$, and
$\varepsilon_i=-1$ otherwise. Clearly $A(\Gamma)\subseteq
B(\Gamma)$, since any generator of $A(\Gamma)$ lies in
$B(\Gamma)$. Now we can give an answer to the question posed above
in terms of these groups. We recall that for a group $X$ and a
subgroup $Y$ of $X$, the normal closure $Y^N$ of $Y$ is the
smallest normal subgroup of $X$ containing $Y$. The elements of
$Y^N$ are all products of conjugates of elements of $Y$.
\begin{proposition}
With notation as above, the following are equivalent.\\
$(1)$ For any group $G$, any transitive function $u:\rho \ra G$ is
trivial.\\
$(2)$ $A(\Gamma)^N=B(\Gamma)^N$.\\
$(3)$ Any generator $b$ of $B(\Gamma)$ can be written in the form
$b=g_1x_1g_1^{-1}\ldots g_mx_mg_m^{-1}$ for some positive integer
$m$, some $g_1,\ldots,g_m\in F(\Gamma)$ and some $x_1,\ldots,x_m$
among the generators in the construction of $A(\Gamma)$.
\end{proposition}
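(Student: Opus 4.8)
The plan is to prove the chain of equivalences $(1)\Leftrightarrow(2)\Leftrightarrow(3)$ by combining Proposition~\ref{prop*} and Proposition~\ref{proptest} to translate statement $(1)$ into a purely group-theoretic statement about $F(\Gamma)$, and then showing that this statement is equivalent to the equality of normal closures, which is in turn just a restatement using the explicit description of elements of a normal closure.

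\smallskip

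\textbf{Step 1: Reformulating $(1)$.} By Proposition~\ref{prop*}, statement $(1)$ is equivalent to saying that every function $v:\Gamma_1\ra G$ with the path-independence property arises from a function $f:\Gamma_0\ra G$ via $v(a)=f(s(a))f(t(a))^{-1}$. Such a $v$ extends uniquely to a group homomorphism $\bar v:F(\Gamma)\ra G$, and the path-independence condition says precisely that $\bar v$ kills every generator of $A(\Gamma)$, i.e. $A(\Gamma)\subseteq\Ker\bar v$; since $\Ker\bar v$ is normal, this is equivalent to $A(\Gamma)^N\subseteq\Ker\bar v$. On the other hand, by Proposition~\ref{proptest} the condition that $v$ comes from some $f:\Gamma_0\ra G$ is equivalent to $\tilde v(z_1)\cdots\tilde v(z_m)=1$ for every cycle in $\tilde\Gamma$; unwinding $\tilde v$ in terms of $v$, a cycle in $\tilde\Gamma$ corresponds to a cycle in $\Gamma^u$ and the product $\tilde v(z_1)\cdots\tilde v(z_m)$ becomes $\bar v$ applied to the corresponding generator $a_1 a_2^{\varepsilon_2}\cdots a_m^{\varepsilon_m}$ of $B(\Gamma)$. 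So ``$v$ comes from an $f$'' is equivalent to $B(\Gamma)\subseteq\Ker\bar v$, hence to $B(\Gamma)^N\subseteq\Ker\bar v$. Therefore $(1)$ says: for every group $G$ and every homomorphism $\bar v:F(\Gamma)\ra G$, $A(\Gamma)^N\subseteq\Ker\bar v$ implies $B(\Gamma)^N\subseteq\Ker\bar v$.

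\smallskip

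\textbf{Step 2: $(1)\Rightarrow(2)$.} Apply the reformulation with $G=F(\Gamma)/A(\Gamma)^N$ and $\bar v$ the canonical projection, whose kernel is exactly $A(\Gamma)^N$. Then $A(\Gamma)^N\subseteq\Ker\bar v$ holds trivially, so by $(1)$ we get $B(\Gamma)^N\subseteq A(\Gamma)^N$; combined with the obvious inclusion $A(\Gamma)\subseteq B(\Gamma)$ (hence $A(\Gamma)^N\subseteq B(\Gamma)^N$), this gives $A(\Gamma)^N=B(\Gamma)^N$. The converse $(2)\Rightarrow(1)$ is immediate from the reformulation, since if the two normal closures coincide then $A(\Gamma)^N\subseteq\Ker\bar v$ is literally the same condition as $B(\Gamma)^N\subseteq\Ker\bar v$.

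\smallskip

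\textbf{Step 3: $(2)\Leftrightarrow(3)$.} Recall that the normal closure $Y^N$ of a subgroup $Y\subseteq X$ consists exactly of all products $g_1 y_1 g_1^{-1}\cdots g_m y_m g_m^{-1}$ with $g_i\in X$ and $y_i\in Y\cup Y^{-1}$; since each $y_i\in Y$ is itself a product of generators of $Y$ and their inverses (and $g\,(y' y'')\,g^{-1}=(g y' g^{-1})(g y'' g^{-1})$), one may take the $y_i$ to range over the generators of $Y$ and their inverses, and absorbing inverses into conjugation one can in fact take them among the generators themselves. Applying this with $X=F(\Gamma)$, $Y=A(\Gamma)$: statement $(3)$ says precisely that every generator $b$ of $B(\Gamma)$ lies in $A(\Gamma)^N$. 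Since $A(\Gamma)^N$ is normal and contains all generators of $B(\Gamma)$ iff it contains $B(\Gamma)$ iff $B(\Gamma)^N\subseteq A(\Gamma)^N$ iff (using $A(\Gamma)\subseteq B(\Gamma)$) $A(\Gamma)^N=B(\Gamma)^N$, this yields $(2)\Leftrightarrow(3)$.

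\smallskip

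The only slightly delicate point — the ``main obstacle'' — is the careful bookkeeping in Step~1 identifying, for an arbitrary cycle in $\tilde\Gamma$, the value $\tilde v(z_1)\cdots\tilde v(z_m)$ with $\bar v$ evaluated on the corresponding generator $a_1 a_2^{\varepsilon_2}\cdots a_m^{\varepsilon_m}$ of $B(\Gamma)$: one must check that traversing a doubled arrow $\tilde a$ contributes $v(a)^{-1}=\bar v(a^{-1})$, matching the exponent $\varepsilon_i=-1$, and that it suffices to test on simple cycles since every cycle in $\Gamma^u$ decomposes into simple ones and the corresponding relators in $F(\Gamma)$ are products of conjugates of the simple-cycle relators. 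Everything else is a direct application of the two preceding propositions and the standard description of a normal closure.
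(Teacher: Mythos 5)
Your proposal follows exactly the paper's route: translate (1) via Propositions \ref{prop*} and \ref{proptest} into the statement that every homomorphism $F(\Gamma)\to G$ killing $A(\Gamma)$ kills $B(\Gamma)$, get (1)$\Rightarrow$(2) by specializing to the projection $F(\Gamma)\to F(\Gamma)/A(\Gamma)^N$, note (2)$\Rightarrow$(1) is immediate, and read off (2)$\Leftrightarrow$(3) from the description of a normal closure. The one flawed point is your justification in Step 3 that one may ``absorb inverses into conjugation'': this is false as a general mechanism, since $g y^{-1} g^{-1}=(g y g^{-1})^{-1}$ is still the inverse of a conjugate, and in general an element of $Y^N$ need not be a product of conjugates of the chosen generators alone (take $Y=F=\langle a\rangle$ infinite cyclic: $a^{-1}\in Y^N$ is not a product of conjugates of $a$). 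What makes (3) equivalent to ``$b\in A(\Gamma)^N$'' here is the specific feature, which the paper invokes explicitly, that the defining generating set of $A(\Gamma)$ is closed under inversion: the inverse of $a_1\ldots a_r b_p^{-1}\ldots b_1^{-1}$ is $b_1\ldots b_p a_r^{-1}\ldots a_1^{-1}$, again a generator (interchange the two paths), so conjugates of inverses of generators are themselves conjugates of generators. With that one observation substituted for the ``absorbing'' remark, your proof is complete and coincides with the paper's.
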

\begin{proof}
By Propositions \ref{prop*} and \ref{proptest}, (1) is equivalent
to the fact that for any group $G$ and any group morphism
$f:F(\Gamma)\ra G$ such that $f(A(\Gamma))=1$, we also have
$f(B(\Gamma))=1$. Indeed, giving a function $v:\Gamma_1\ra G$ is
the same with giving a group morphism $f:F(\Gamma)\ra G$;
moreover, $v$ satisfies $v(a_1) \ldots v(a_r)= v(b_1) \ldots
v(b_s) $ for any paths $a_1 \ldots a_r$ and $b_1 \ldots b_s$ in
$\Gamma$ with $s(a_1)=s(b_1)$ and $t(a_r)=t(b_s)$, if and only if
$f(A(\Gamma))=1$. On the other hand, such a $v$ is trivial if and
only if $f(B(\Gamma))=1$.

Now if (1) holds, then the projection $F(\Gamma)\ra
F(\Gamma)/A(\Gamma)^N$ is trivial on $A(\Gamma)$, so it must be
trivial on $B(\Gamma)$. This shows that $B(\Gamma)\subseteq
A(\Gamma)^N$, or $A(\Gamma)^N=B(\Gamma)^N$. Conversely, if (2)
holds, then any group morphism $f:F(\Gamma)\ra G$ which is trivial
on $A(\Gamma)$ is also trivial on $A(\Gamma)^N$, and then also on
$B(\Gamma)$, so (1) holds.

The equivalence between (2) and (3) follows from the description
of the normal closure of a subgroup, and the fact that the given
set of generators of $A(\Gamma)$ is closed under inverse.
\end{proof}

\begin{example} \label{exemplu1}
Assume that $\rho$ is a preorder relation such that the associated
graph $\Gamma$ is of the form
$$\xymatrix{&^m\bullet&\\^{m-1}\bullet\ar[ur]^{a_{m-1}}&&\bullet^{m+1}\ar[ul]_{b_{p+1}}\\
^2\bullet\ar@{.}[u]&&
\bullet^{m+p}\ar@{.}[u]\\&^1\bullet\ar[ul]^{a_1}\ar[ur]_{b_1}&}$$
for some integers $m\geq 3$ and $p\geq 1$. Thus there are two
paths from 1 to $m$, these are $a_1\ldots a_{m-1}$ and $b_1\ldots
b_{p+1}$. Then for any group $G$, any transitive function
$u:\rho\ra G$ is trivial. Indeed, if $v$ is a function as in
Proposition \ref{proptest}, then $v(a_1)\ldots
v(a_{m-1})=v(b_1)\ldots v(b_{p+1})$, and then the condition in (2)
of the mentioned Proposition is obviously satisfied, as it is
clear that the associated undirected graph has just one cycle.

Alternatively, $A(\Gamma)$ is the cyclic group generated by
$x=a_1\ldots a_{m-1}b_{p+1}^{-1}\ldots b_1^{-1}$, while the
generators of $B(\Gamma)$ are all conjugates of $x$ or $x^{-1}$.
For example, $a_2\ldots a_{m-1}b_{p+1}^{-1}\ldots
b_1^{-1}a_1=a_1^{-1}xa_1$. Thus $A(\Gamma)^N=B(\Gamma)^N$.
\end{example}

\begin{example} \label{exemplu2}
Assume that $\rho$ is a preorder relation such that the associated
graph $\Gamma$ is of the form
$$\xymatrix{  & \bullet \ar[dl] \ar@{.}@/^/[r] & \bullet \ar[dr]& \\ \bullet  & &  & \bullet  \\ & \bullet \ar[ul] \ar@{.}@/_/[r]& \bullet \ar[ur] & }
$$
Thus the  un-directed graph $\Gamma^u$ associated to $\Gamma$ is
cyclic, and in $\Gamma$ there are at least two vertices where both
adjacent arrows terminate (equivalently, $\Gamma^u$ is cyclic and
$\Gamma$ is not of the type in Example \ref{exemplu1}). Then for
any non-trivial group $G$, there exist transitive functions
$u:\rho \ra G$ that are not trivial. Indeed, it is enough to show
that the condition (3) in Proposition \ref{prop*} does not hold.
Since $\Gamma$ does not have two different paths starting from the
same vertex and ending at the same vertex, we only have to see
that not any function $v:\Gamma_1\ra G$ is given by
$v(a)=f(s(a))f(t(a))^{-1}$, $a\in \Gamma_1$, for some function
$f:\Gamma_0\ra G$. This is clear, for instance we can take $v$ to
be the identity of $G$ on all but one arrows.

We note that $A(\Gamma)$ is trivial, since there are no two
different paths starting from the same vertex  and terminating at
the same vertex. On the other hand, $B(\Gamma)$ is not trivial,
since $\Gamma^u$ has a cycle.

The simplest example of such a graph is
$$\xymatrix{ & \bullet \ar[dl] \ar[dr] & \\ \bullet & & \bullet \\ & \bullet \ar[ul] \ar[ur] & }$$
and the corresponding structural matrix algebra, whose not all
good gradings arise from graded flags, is
$$\left(
\begin{array}{cccc}
k&0&k&k\\
0&k&k&k\\
0&0&k&0\\
0&0&0&k
\end{array}
\right).$$
\end{example}

\begin{example}
If we construct a graph $\Delta$ by taking a graph $\Gamma$ as in
Example \ref{exemplu2}, adding a vertex $v$, and adding an arrow
from each vertex in $\Gamma$ where both adjacent arrows terminate
to $v$, as in the picture below

$$\xymatrix{ & ^v\bullet & & \\ & \bullet \ar[dl] \ar@{.}@/^/[r] & \bullet \ar[dr]& \\ \bullet \ar@/^/[uur] & &  & \bullet \ar@/_1pc/[uull] \\ & \bullet \ar[ul] \ar@{.}@/_/[r]& \bullet \ar[ur] & }$$
then all transitive functions (on the corresponding preordered
set) are trivial. For simplicity, we explain this in the case
where $\Delta$ is

$$ \xymatrix{ \bullet & & \\ & \bullet \ar[dl]_a \ar[dr]^b & \\ \bullet \ar@/^/[uu]^x& & \bullet \ar@/_2pc/[uull]_y \\ & \bullet \ar[ul]^c \ar[ur]_d &  }$$
but the argument is the same in general. In this case, $A(\Delta)$
is generated by $g=axy^{-1}b^{-1}$ and $h=cxy^{-1}d^{-1}$, while
$B(\Delta)$ is generated by certain conjugates of $g$ and $h$ (as
explained in Example \ref{exemplu2}), $ac^{-1}db^{-1}$, and
certain conjugates of the latter. But
$ac^{-1}db^{-1}=g(db^{-1})^{-1}h^{-1}(db^{-1})$, showing that
$A(\Delta)^N=B(\Delta)^N$.
\end{example}

\section{Isomorphisms between graded endomorphism algebras}
\label{sectionisograded}

If $V=\oplus_{g\in G}V_g$ is a $G$-graded vector space, then for
any $\sigma\in G$ the right $\sigma$-suspension of $V$ is the
$G$-graded vector space $V(\sigma)$ which is just $V$ as a vector
space, with the grading shifted by $\sigma$ as follows:
$V(\sigma)_g=V_{g\sigma}$ for any $g\in G$. Also, the left
$\sigma$-suspension of $V$, denoted by $(\sigma)V$, is the vector
space $V$ with the grading given by $((\sigma)V)_g=V_{\sigma g}$.
For any $\sigma,\tau\in G$ one has
$(V(\sigma))(\tau)=V(\tau\sigma)$,
$(\sigma)((\tau)V)=(\tau\sigma)V$ and
$((\sigma)V)(\tau)=(\sigma)(V(\tau))$. The fact that there are two
types of suspensions for a graded vector space $V$ can be
explained by the fact that $V$ is an object in the category of
left graded modules, and also an object in the category of right
graded modules over the algebra $k$, regarded with the trivial
$G$-grading. Then $V(\sigma)$ and $(\sigma)V$ are just the
suspensions when $V$ is regarded in these categories.

If $V$ and $W$ are $G$-graded vector spaces and $\sigma\in G$, we
say that a linear map $f:V\ra W$ is a morphism of left degree
$\sigma$ if $f(V_g)\subseteq W_{\sigma g}$ for any $g\in G$; this
means that $f$ is a morphism of graded vector spaces when regarded
as $f:V\ra (\sigma)W$. Similarly, $f$ is a morphism of right
degree $\sigma$ if $f(V_g)\subseteq W_{g\sigma}$ for any $g\in G$.

If $\mathcal{F}=(V,(V_{\alpha})_{\alpha \in \mathcal{C}})$  is a
$G$-graded $\rho$-flag and $\sigma\in G$, then the right
 suspension of $\mathcal{F}$ is
$\mathcal{F}(\sigma)=(V(\sigma),(V_{\alpha})_{\alpha \in
\mathcal{C}})$, and the left
 suspension   of $\mathcal{F}$ is
$(\sigma)\mathcal{F}=((\sigma)V,(V_{\alpha})_{\alpha \in
\mathcal{C}})$. It is clear that ${\rm END}(\mathcal{F})_\sigma$ is just
the space of morphisms of graded flags from $\mathcal{F}$ to
$(\sigma)\mathcal{F}$.

Let $\mathcal{F}=(V,(V_{\alpha})_{\alpha \in \mathcal{C}})$  be a
$G$-graded $\rho$-flag, with a homogeneous basis $B=\displaystyle
\bigcup_{\alpha \in \mathcal{C}}B_{\alpha}$ of $V$ providing the
flag structure.

Let $\mathcal{C}=\mathcal{C}^1 \cup \ldots \cup \mathcal{C}^q$ be
the decomposition of $\mathcal{C}$ in disjoint connected
components; these correspond to the connected components of the
undirected graph $\Gamma^u$. For each $1\leq t\leq q$, let
$\rho_t$ be the preorder relation on the set $\displaystyle
\bigcup_{\alpha \in {\mathcal C}^t}\alpha$, by restricting $\rho$.

If $V^t=\displaystyle \sum_{\alpha \in {\mathcal C}^t}V_\alpha$,
then $\mathcal{F}^t=(V^t,(V_{\alpha})_{\alpha \in \mathcal{C}^t})$
is a $G$-graded $\rho_t$-flag with basis $\displaystyle
\bigcup_{\alpha \in \mathcal{C}^t}B_{\alpha}$. Obviously,
$V=\displaystyle \bigoplus_{1\leq t\leq q}V^t$. In a formal way we
can write $\mathcal{F}=\mathcal{F}^1\oplus \ldots \oplus
\mathcal{F}^q$, where $\mathcal{F}$ is a $G$-graded $\rho$-flag,
and $\mathcal{F}^t$ is a $G$-graded $\rho_t$-flag for each $1\leq
t\leq q$. \\

\begin{definition} \label{giso}
Let $\rho$ and $\mu$ be isomorphic preorder relations (i.e. the
preordered sets on which $\rho$ and $\mu$ are defined are
isomorphic). Let $\mathcal C$ and $\mathcal D$ be the posets
associated with $\rho$ and $\mu$, and let $g:{\mathcal C}\ra
{\mathcal D}$ be an isomorphism of posets. We say that a
$\rho$-flag ${\mathcal F}=(V,(V_{\alpha})_{\alpha \in {\mathcal
C}}))$ is $g$-isomorphic to a $\mu$-flag ${\mathcal
G}=(W,(W_{\beta})_{\beta \in {\mathcal D}}))$ if there is a linear
isomorphism $u:V\ra W$ such that $u(V_\alpha)=W_{g(\alpha)}$ for
any $\alpha \in {\mathcal C}$. If $\mathcal F$ and $\mathcal G$
are $G$-graded flags, we say that they are $g$-isomorphic as
graded flags if there is such an $u$ which is a morphism of graded
vector spaces.
\end{definition}

\begin{lemma} \label{lemaisoEND}
With notation as in Definition \ref{giso}, if the $G$-graded flags
$\mathcal F$ and $\mathcal G$ are $g$-isomorphic, then
${\rm END}(\mathcal{F})$ and ${\rm END}(\mathcal{G})$ are isomorphic as
$G$-graded algebras.
\end{lemma}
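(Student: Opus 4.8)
The plan is to exhibit the graded algebra isomorphism explicitly, by conjugation with the given $g$-isomorphism. Let $u:V\ra W$ be a morphism of graded vector spaces with $u(V_\alpha)=W_{g(\alpha)}$ for all $\alpha\in\mathcal C$, and define $\Phi:{\rm End}(\mathcal F)\ra{\rm End}(\mathcal G)$ by $\Phi(f)=u\circ f\circ u^{-1}$. The first thing I would check is that $\Phi$ is well defined, i.e. that $\Phi(f)$ is a morphism of $\mu$-flags whenever $f$ is a morphism of $\rho$-flags. Given $\beta\in\mathcal D$, write $\beta=g(\alpha)$; then $u^{-1}(W_\beta)=V_\alpha$, and from $f(V_\alpha)\subseteq V_\alpha$ we get $\Phi(f)(W_\beta)=u\big(f(V_\alpha)\big)\subseteq u(V_\alpha)=W_{g(\alpha)}=W_\beta$. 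Since $u^{-1}$ is a $g^{-1}$-isomorphism from $\mathcal G$ to $\mathcal F$, the same computation shows $f\mapsto u^{-1}\circ f\circ u$ sends ${\rm End}(\mathcal G)$ into ${\rm End}(\mathcal F)$; these two maps are clearly mutually inverse, and conjugation is visibly linear and multiplicative, so $\Phi$ is an isomorphism of $k$-algebras.

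It then remains to verify that $\Phi$ is compatible with the $G$-gradings of Proposition \ref{defEND}, for which it suffices to prove $\Phi\big({\rm End}(\mathcal F)_\sigma\big)\subseteq{\rm End}(\mathcal G)_\sigma$ for every $\sigma\in G$ (the reverse inclusion follows by applying this to $\Phi^{-1}$ and $u^{-1}$). Here I would first record that a bijective morphism of graded vector spaces matches homogeneous components degree by degree: since $u(V_g)\subseteq W_g$ for all $g$ and $u$ is an isomorphism, a direct-sum argument gives $u(V_g)=W_g$, hence $u^{-1}(W_g)=V_g$, for every $g\in G$. Now take $f\in{\rm End}(\mathcal F)_\sigma$, so that $f(V_g)\subseteq V_{\sigma g}$ for all $g$. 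Then $\Phi(f)(W_g)=u\big(f(u^{-1}(W_g))\big)=u\big(f(V_g)\big)\subseteq u(V_{\sigma g})=W_{\sigma g}$, which is exactly the condition $\Phi(f)\in{\rm End}(\mathcal G)_\sigma$. This completes the argument.

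I do not anticipate a genuine obstacle: each step is a direct unwinding of the definitions. The only points requiring a little care are the bookkeeping with $g$ — the flag condition transports through $u$ precisely because $u$ carries the $\alpha$-subspace onto the $g(\alpha)$-subspace — and the elementary but essential observation that a graded linear isomorphism restricts to isomorphisms $V_g\xrightarrow{\sim}W_g$ in each degree, which is what makes the grading compatibility fall out immediately.
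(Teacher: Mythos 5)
Your proposal is correct and follows exactly the paper's route: the paper's proof is the one-line observation that conjugation $\Phi(\phi)=u\phi u^{-1}$ by the $g$-isomorphism $u$ gives the graded algebra isomorphism, and you have simply spelled out the verifications (flag preservation via $u(V_\alpha)=W_{g(\alpha)}$ and degree preservation via $u(V_g)=W_g$) that the paper leaves implicit. No gaps; your write-up is a fuller version of the same argument.
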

\begin{proof}
If $u:V\ra W$ is a $g$-isomorphism between the graded flags
$\mathcal F$ and $\mathcal G$, then $\Phi:{\rm END}(\mathcal{F})\ra
{\rm END}(\mathcal{G})$, $\Phi (\phi)=u\phi u^{-1}$ is an isomorphism of
$G$-graded algebras.
\end{proof}

Now we will consider another $G$-graded $\rho$-flag
$\mathcal{F}'=(V',(V'_{\alpha})_{\alpha \in \mathcal{C}})$. As we
did for $\mathcal F$, we also have $V'=\displaystyle
\bigoplus_{1\leq t\leq q}V'^t$ and
$\mathcal{F}'=\mathcal{F}'^1\oplus \ldots \oplus \mathcal{F}'^q$,
where $\mathcal{F}'^t$ is a $G$-graded $\rho_t$-flag for each
$1\leq t\leq q$.

\begin{theorem}  \label{teoremaisograd}
Let $\mathcal{F}=(V,(V_{\alpha})_{\alpha \in \mathcal{C}})$ and
$\mathcal{F}'=(V',(V'_{\alpha})_{\alpha \in \mathcal{C}})$ be
$G$-graded $\rho$-flags. Then the following assertions are
equivalent:\\
{\rm (1)} ${\rm END}(\mathcal{F})$ and ${\rm END}(\mathcal{F}')$ are isomorphic as
$G$-graded algebras.\\
{\rm (2)} There exist $g\in {\rm Aut}_0(\mathcal{C})$,
$\sigma_1,\ldots,\sigma_q\in G$ and a $g$-isomorphism $\gamma:V\ra
V'$ between the (ungraded) $\rho$-flags $\mathcal{F}$ and
$\mathcal{F}'$, such that
$\gamma_{|V^t}^{|V'^{\overline{g}(t)}}:V^t\ra
V'^{\overline{g}(t)}$ is a linear isomorphism of left degree
$\sigma_t$ for any $1\leq t\leq q$, where $\overline{g}\in S_q$ is
the permutation induced by $g$, i.e.
$g(\mathcal{C}^t)=\mathcal{C}^{\overline{g}(t)}$.\\
{\rm (3)} There exists a permutation $\tau\in S_q$, an isomorphism
$g_t:\mathcal{C}^t\ra \mathcal{C}^{\tau(t)}$ for each $1\leq t\leq
q$, and $\sigma_1,\ldots ,\sigma_q\in G$, such that
$\mathcal{F}^t(\sigma_t)$ is $g_t$-isomorphic to
$\mathcal{F}'^{\tau(t)}$ for any $1\leq t\leq q$.
\end{theorem}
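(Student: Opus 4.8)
The plan is to prove the cycle of implications $(1)\Rightarrow(2)\Rightarrow(3)\Rightarrow(1)$, exploiting the structural results of Section~\ref{sectionisomorphisms} (especially Proposition~\ref{surjective} and its proof) together with the decomposition $\mathcal{F}=\mathcal{F}^1\oplus\cdots\oplus\mathcal{F}^q$ along connected components.

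For $(1)\Rightarrow(2)$: suppose $\Phi:{\rm END}(\mathcal{F})\to{\rm END}(\mathcal{F}')$ is an isomorphism of $G$-graded algebras. Forgetting the grading, Proposition~\ref{surjective} (and the analysis in its proof) produces $g\in{\rm Aut}_0(\mathcal{C})$, scalars $a_{ij}$, and a linear isomorphism $\gamma:V\to V'$ with $\gamma(V_\alpha)=V'_{g(\alpha)}$ for all $\alpha$, satisfying $\Phi(E_{ij})\cdot\gamma(v_t)=a_{ij}\gamma(E_{ij}\cdot v_t)$; moreover $\gamma(v_i)=w_i$ where $w_i$ spans ${\rm Im}\,\Phi(E_{ii})$. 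The new ingredient is the grading. Since each $E_{ii}$ is homogeneous of degree $e$, and $\Phi$ is graded, $\Phi(E_{ii})$ is homogeneous of degree $e$, so ${\rm Im}\,\Phi(E_{ii})$ is a homogeneous line in $V'$; thus each $w_i$ can be chosen homogeneous, of some degree $h_i'\in G$. For $i\rho j$ the element $E_{ij}$ is homogeneous of degree $g_ig_j^{-1}$ (where $g_i=\deg v_i$), hence $\Phi(E_{ij})$ has that degree, and applying it to $w_j$ (degree $h_j'$) lands in degree $g_ig_j^{-1}h_j'$; but $\Phi(E_{ij})w_j=a_{ij}w_i$ has degree $h_i'$, so $h_i'=g_ig_j^{-1}h_j'$, i.e. $h_i'h_j'^{-1}=g_ig_j^{-1}$ whenever $i\rho j$. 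Now fix a connected component $\mathcal{C}^t$ and the corresponding index set; on each such block the relation $h_i'h_j'^{-1}=g_ig_j^{-1}$ propagates along the graph $\Gamma$ (which is connected on $\mathcal{C}^t$), forcing $g_i^{-1}h_i'$ to be a constant $\sigma_t\in G$ as $i$ ranges over the indices of $\mathcal{C}^t$ — here one uses transitivity of $\rho$ within the block together with connectedness. This means $\deg w_i=\sigma_t g_i$ for $i$ in component $t$, so $\gamma$ sends the degree-$g_i$ vector $v_i$ to the degree-$\sigma_t g_i$ vector $w_i$; that is precisely the statement that $\gamma_{|V^t}$ has left degree $\sigma_t$. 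Since $g\in{\rm Aut}_0(\mathcal{C})$ it permutes connected components, inducing $\overline{g}\in S_q$, and $\gamma(V^t)=V'^{\overline{g}(t)}$ because $\gamma(V_\alpha)=V'_{g(\alpha)}$.

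For $(2)\Rightarrow(3)$: take $\tau=\overline{g}$ and $g_t=g_{|\mathcal{C}^t}:\mathcal{C}^t\to\mathcal{C}^{\tau(t)}$. Restricting $\gamma$ gives a linear isomorphism $\gamma^t:V^t\to V'^{\tau(t)}$ with $\gamma^t(V_\alpha)=V'_{g_t(\alpha)}$ for $\alpha\in\mathcal{C}^t$, and $\gamma^t$ has left degree $\sigma_t$. Unwinding the definition of the suspension, a linear map of left degree $\sigma_t$ from $V^t$ is the same as a graded map from $V^t(\sigma_t)$ (recall $(V(\sigma))_g=V_{g\sigma}$, so multiplying degrees on the right by $\sigma_t$ turns a left-degree-$\sigma_t$ map into a degree-$e$ map — one checks the bookkeeping carefully, which side the shift goes on). Hence $\gamma^t$ is a graded $g_t$-isomorphism from $\mathcal{F}^t(\sigma_t)$ to $\mathcal{F}'^{\tau(t)}$ in the sense of Definition~\ref{giso}. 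The subtle point is matching the two conventions: the left suspension appearing in ${\rm END}(\mathcal{F})_\sigma={\rm Hom}(\mathcal{F},(\sigma)\mathcal{F})$ versus the right suspension $\mathcal{F}^t(\sigma_t)$ in the statement, so I would spell out explicitly that a morphism of left degree $\sigma$ from $V$ equals a graded morphism from $V(\sigma)$ — this follows from $((V(\sigma))_g$ as above, since $f:V\to W$ with $f(V_g)\subseteq W_{\sigma g}$ means $f(V(\sigma)_g)=f(V_{g\sigma})\subseteq W_{\sigma g\sigma}$... so actually one must be careful and the correct reformulation may involve $(\sigma)V$ or a suspension on $W$; I will pin down the exact convention during write-up.

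For $(3)\Rightarrow(1)$: assemble from the data a single $g$-isomorphism. Define $g:\mathcal{C}\to\mathcal{C}$ by gluing the $g_t$; since each $g_t$ is an isomorphism of the posets $\mathcal{C}^t\to\mathcal{C}^{\tau(t)}$ and these are the connected components, $g$ is a poset automorphism of $\mathcal{C}$. It lies in ${\rm Aut}_0(\mathcal{C})$ because a $g_t$-isomorphism of flags $\mathcal{F}^t(\sigma_t)\to\mathcal{F}'^{\tau(t)}$ forces ${\rm dim}\,V_\alpha={\rm dim}\,V'_{g_t(\alpha)}$, whence $m_\alpha=m_{g(\alpha)}$ as in the proof of Proposition~\ref{surjective}. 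Let $u^t:V^t(\sigma_t)\to V'^{\tau(t)}$ be the given graded $g_t$-isomorphisms; then $u=\bigoplus_t u^t:V\to V'$ is a linear isomorphism with $u(V_\alpha)=V'_{g(\alpha)}$, and on the $t$-th block it has left degree $\sigma_t$. This $u$ need not be graded globally (the $\sigma_t$ differ across components), so Lemma~\ref{lemaisoEND} does not apply verbatim; instead one checks directly that $\Phi(\phi)=u\phi u^{-1}$ defines an algebra isomorphism ${\rm END}(\mathcal{F})\to{\rm END}(\mathcal{F}')$ which is graded. Gradedness is the key computation: for $\phi\in{\rm END}(\mathcal{F})_\mu$, i.e. $\phi(V_g)\subseteq V_{\mu g}$, and a homogeneous $w\in V'$ of degree $h$, write $w=u^t(v)$ with $v\in V^t$ of degree $h\sigma_t^{-1}$ (say, in the relevant block); then $\phi(v)$ has degree $\mu h\sigma_t^{-1}$, and since $\phi$ preserves each $V^s$ (as $E_{ij}\ne0$ forces $i,j$ in the same component), $\phi(v)\in V^t$, so $u^t(\phi(v))$ has degree $\mu h\sigma_t^{-1}\sigma_t=\mu h$; thus $u\phi u^{-1}(w)$ has degree $\mu h$, proving $\Phi({\rm END}(\mathcal{F})_\mu)\subseteq{\rm END}(\mathcal{F}')_\mu$. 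The same argument for $u^{-1}$ gives equality, so $\Phi$ is an isomorphism of graded algebras. The main obstacle throughout is the careful handling of left-versus-right suspensions and the fact that the component shifts $\sigma_t$ prevent $u$ from being globally graded — so the cleanest route is to work with the explicit degree bookkeeping at the level of homogeneous basis vectors rather than trying to invoke Lemma~\ref{lemaisoEND} directly.
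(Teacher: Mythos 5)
Your route is the paper's: (1)$\Rightarrow$(2) via Proposition \ref{surjective} together with a homogeneous choice of the $w_i$ and propagation of the degree relation along connected components; (2)$\Rightarrow$(3) by restricting $\gamma$; (3)$\Rightarrow$(1) by assembling the component isomorphisms (your direct check that conjugation by $u=\oplus_t u^t$ is a graded isomorphism is a harmless variant of the paper's use of ${\rm END}(\mathcal{F})\simeq {\rm END}(\mathcal{F}^1)\times\ldots\times{\rm END}(\mathcal{F}^q)$ and Lemma \ref{lemaisoEND}, and your observation that every $\phi$ preserves the blocks $V^s$ is correct). The genuine gap is the degree bookkeeping that you explicitly leave open (``I will pin down the exact convention during write-up''): as written, the chain (1)$\Rightarrow$(2)$\Rightarrow$(3) does not close for nonabelian $G$, and this bookkeeping is precisely the new content of the theorem beyond Proposition \ref{surjective}.

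Concretely: from $h'_i=g_ig_j^{-1}h'_j$ for $i\rho j$ you correctly conclude that $g_i^{-1}h'_i$ is a constant $c_t$ on each connected component, but this gives $\deg w_i=g_i c_t$ (a right shift), not $\deg w_i=c_t g_i$ as you then write; so $\gamma_{|V^t}$ has \emph{right} degree $c_t$, i.e., it is a graded morphism $V^t(c_t^{-1})\to V'^{\overline{g}(t)}$, and it is exactly this right-shift form (the paper's relation (\ref{gradebaza}), with $\sigma_t=c_t^{-1}$) that matches the right suspension $\mathcal{F}^t(\sigma_t)$ appearing in (3). With the left-degree conclusion you state, the identification you attempt in (2)$\Rightarrow$(3) genuinely fails --- which is what your own computation $f(V_{g\sigma})\subseteq W_{\sigma g\sigma}$ detects --- and your (3)$\Rightarrow$(1) degree count, which silently shifts degrees on the right (from $h\sigma_t^{-1}$ back to $\mu h$), is inconsistent with it. To be fair, the wording ``left degree $\sigma_t$'' in item (2) of the statement has the same wobble, while the paper's proof actually establishes and uses the right-shift relation via $Q_i\simeq R_i(\sigma_t)$; but a complete proof must carry out this consistent bookkeeping rather than defer it, since otherwise the implication (2)$\Rightarrow$(3) is unproved (and false under the literal left-degree reading for nonabelian $G$).
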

\begin{proof}
(1)$\Rightarrow $(2) Let $\phi:{\rm END}(\mathcal{F})\ra
{\rm END}(\mathcal{F}')$ be an isomorphism of $G$-graded algebras. We
follow the line of proof of Proposition \ref{surjective} and its
notation, adding the additional information related to the graded
structure. Let $g_i={\rm deg}\, v_i$ for any $i$. Then $E_{ij}$ is
a homogeneous element of degree $g_ig_j^{-1}$ of
${\rm END}(\mathcal{F})$, and $F_{ij}=\phi (E_{ij})$ also has degree
$g_ig_j^{-1}$ in ${\rm END}(\mathcal{F}')$.

Now $Q_i={\rm Im}\, F_{ii}$ is a graded subspace of $V'$,
$V'=\displaystyle \bigoplus_{1\leq i\leq n}Q_i$ and each $Q_i$ is
1-dimensional. For each $i$ pick $w_i\in Q_i\setminus \{0\}$,
which is a homogeneous element. Then for any $i,j$ with $i\rho j$
we have $F_{ij}(w_j)=a_{ij}w_i$ for some $a_{ij}\in k^*$, and
$a_{ij}a_{jr}=a_{ir}$ for any $i,j,r$ with $i\rho j$ and $j\rho
r$.

Since $(F_{ij})_{|Q_j}^{|Q_i}:Q_j\ra Q_i$ is a linear isomorphism
of degree $g_ig_j^{-1}$ (with inverse
$(F_{ji})_{|Q_i}^{|Q_j}:Q_i\ra Q_j$), we obtain that $Q_i\simeq
(g_jg_i^{-1})Q_j\simeq (g_i^{-1})(g_j)Q_j$ for any $i,j$ with
$i\rho j$, so then $(g_i)Q_i\simeq (g_j)Q_j$ as graded vector
spaces. This implies that $(g_i)Q_i$ has the same isomorphism type
for all $i\in\alpha$ with $\alpha$ lying in a connected component
of $\mathcal{C}$.

On the other hand, if $R_i=kv_i$ for any $i$, then $(g_i)R_i$ and
$(g_j)R_j$ are isomorphic graded vector spaces for any $i,j$. Then
there are $\sigma_1,\ldots,\sigma_q\in G$ such that for any $1\leq
t\leq q$ we have $Q_i\simeq R_i(\sigma_t)$ for any $i\in \alpha$
with $\alpha \in \mathcal{C}^t$. Indeed, fix some $t$ and pick
$\alpha_0\in\mathcal{C}^t$ and $i_0\in \alpha_0$. Since $Q_{i_0}$
and $R_{i_0}$ are 1-dimensional graded vector spaces, there exists
$\sigma_t\in G$ such that $Q_{i_0}(\sigma_t)\simeq R_{i_0}$. Then
for any $\alpha \in \mathcal{C}^t$ and any $i\in \alpha$ one has
$$Q_i(\sigma_t)\simeq ((g_{i_0}g_i^{-1})Q_{i_0})(\sigma_t)=(g_{i_0}g_i^{-1})(Q_{i_0}(\sigma_t))\simeq
(g_{i_0}g_i^{-1})R_{i_0}\simeq R_i$$ Thus we obtain that
\begin{equation} \label{gradebaza} {\rm deg}\, v_i=({\rm deg}\,
w_i)\sigma_t \;\;\; \mbox{for any }i\in\alpha \mbox{ with
}\alpha\in \mathcal{C}^t
\end{equation}

As in the proof of Proposition \ref{surjective}, the linear map
$\gamma:V\ra V'$ with $\gamma(v_i)=w_i$ for any $i$, is a
$\phi'$-isomorphism for a certain algebra isomorphism $\phi':{\rm
{\rm END}}(\mathcal{F})\ra {\rm END}(\mathcal{F}')$, and then there
exists $g\in {\rm Aut}_0(\mathcal{C})$ with
$\gamma(V_\alpha)=V'_{g(\alpha)}$ for any $\alpha \in
\mathcal{C}$. By (\ref{gradebaza}), $\gamma_{|V^t}$ is a linear
morphism of left degree $\sigma_t$, and we are done.\\

(2)$\Rightarrow$(3) Take $\tau=\overline{g}$, and let
$g_t:\mathcal{C}^t\ra\mathcal{C}^{\tau(t)}$ be the restriction and
corestriction of $g$ for each $t$. Then the restriction and
corestriction of $\gamma$ to $V^t$ and $V'^{\tau(t)}$ gives a
$g_t$-isomorphism of graded flags $\mathcal{F}^t(\sigma_t)\simeq
\mathcal{F}'^{\tau(t)}$.\\

(3)$\Rightarrow$(1) It is clear that ${\rm END}(\mathcal{F})\simeq
{\rm END}(\mathcal{F}^1)\times \ldots \times
{\rm END}(\mathcal{F}^q)$ and ${\rm END}(\mathcal{F}')\simeq
{\rm END}(\mathcal{F'}^1)\times \ldots \times
{\rm END}(\mathcal{F'}^q)$ as $G$-graded algebras. By Lemma
\ref{lemaisoEND}, we see that $
{\rm END}(\mathcal{F}^t(\sigma_t))\simeq
{\rm END}(\mathcal{F}'^{\tau(t)})$ for any $1\leq t\leq q$. As it is
obvious that ${\rm END}(\mathcal{F}^t(\sigma_t))=
{\rm END}(\mathcal{F}^t)$, we get ${\rm END}(\mathcal{F}^t)\simeq
{\rm END}(\mathcal{F}'^{\tau(t)})$ for any $1\leq t\leq q$. We conclude
that ${\rm END}(\mathcal{F})\simeq {\rm END}(\mathcal{F}')$.
\end{proof}

\section{Classification of gradings arising from graded
flags}\label{sectionorbits}

The aim of this section is to classify $G$-gradings on $M(\rho,
k)$ arising from graded flags by the orbits of a certain group
action. We keep all the notations of Section
\ref{sectionisograded}. We first consider three group actions on
the set $G^n$.\\

$\bullet$ ${\rm Aut}_0(\mathcal{C})$ acts to the right on $G^n$ by
$$(h_i)_{1\leq i\leq n}\leftarrow g=(h_{\tilde{g}(i)})_{1\leq i\leq
n}$$ for any $(h_i)_{1\leq i\leq n}\in G^n$ and $g\in
{\rm Aut}_0(\mathcal{C})$.\\

$\bullet$ $G^q$ acts to the right on $G^n$ by $$(h_i)_{1\leq i\leq
n}\leftarrow (\sigma_t)_{1\leq t\leq q}=(h'_i)_{1\leq i\leq n}$$
where for each $i$ we define $h'_i=h_i\sigma_p$, where $p$ is such
that $\hat{i}\in \mathcal{C}^p$.\\

$\bullet$ For each $\alpha \in \mathcal{C}$ let $S(\alpha)$ be the
symmetric group of $\alpha$ (regarded as a subset of $\{
1,\ldots,n\}$. We consider the group $\prod_{\alpha\in
\mathcal{C}}S(\alpha)$, which is a Young subgroup of $S_n$
(isomorphic to $\prod_{\alpha \in \mathcal{C}}S_{m_\alpha}$). Then
$\prod_{\alpha\in \mathcal{C}}S(\alpha)$ acts to the right on
$G^n$ by $$(h_i)_{1\leq i\leq n}\leftarrow (\psi_\alpha)_{\alpha
\in \mathcal{C}}=(h'_i)_{1\leq i\leq n}$$ with $h'_i$ defined by
$h'_i=h_{\psi_\alpha(i)}$, where $\alpha=\hat{i}$, for each $i$.\\

Now there is a right action of the group ${\rm Aut}_0(\mathcal{C})$ on
the group $G^q$ defined by $$(\sigma_t)_{1\leq t\leq q}\leftarrow
g=(\sigma_{\tau(t)})_{1\leq t\leq q}$$ where $\tau \in S_q$ is the
permutation induced by $g$. Then we have a right semidirect
product ${\rm Aut}_0(\mathcal{C})\ltimes G^q$. Moreover, the
compatibility condition (\ref{comprightactionset}) holds for these
actions, i.e.
\begin{equation} \label{comp12}
((h_i)_{1\leq i\leq n}\leftarrow (\sigma_t)_{1\leq t\leq
q})\leftarrow g=((h_i)_{1\leq i\leq n}\leftarrow
g)\leftarrow((\sigma_t)_{1\leq t\leq q}\leftarrow g)
\end{equation}
Indeed, it is easy to see that both sides of equation
(\ref{comp12}) have on the $i$th position
$h_{\tilde{g}(i)}\sigma_{\tau (p)}$, where $p$ is such that
$\hat{i}\in \mathcal{C}^p$. We conclude that
${\rm Aut}_0(\mathcal{C})\ltimes G^q$ acts to the right on the set $G^n$
by
$$(h_i)_{1\leq i\leq n}\leftarrow (g\ltimes (\sigma_t)_{1\leq t\leq
q})=((h_i)_{1\leq i\leq n}\leftarrow g)\leftarrow
(\sigma_t)_{1\leq t\leq q}$$

On the other hand, it is  straightforward  to check that
${\rm Aut}_0(\mathcal{C})$ acts to the left on the group
$\prod_{\alpha\in \mathcal{C}}S(\alpha)$ by $g\ra
(\psi_\alpha)_{\alpha \in \mathcal{C}}= (\psi'_\alpha)_{\alpha \in
\mathcal{C}}$, where for any $\alpha\in \mathcal{C}$,
$\psi'_{\alpha}$ is defined by
$$\psi'_{\alpha}(i)=\tilde{g}(\psi_{g^{-1}(\alpha)}(\tilde{g}^{-1}(i)))$$
for any $i\in \alpha$. Moreover, one can check that

\begin{equation} \label{comp13}
((h_i)_{1\leq i\leq n}\leftarrow g)\leftarrow
(\psi_\alpha)_{\alpha \in \mathcal{C}}=((h_i)_{1\leq i\leq
n}\leftarrow (g\ra (\psi_\alpha)_{\alpha \in
\mathcal{C}}))\leftarrow g
\end{equation}

The left action of ${\rm Aut}_0(\mathcal{C})$  on $\prod_{\alpha\in
\mathcal{C}}S(\alpha)$ induces a left action of
${\rm Aut}_0(\mathcal{C})\ltimes G^q$  on  $\prod_{\alpha\in
\mathcal{C}}S(\alpha)$, with $1\ltimes G^q$ acting trivially on
$\prod_{\alpha\in \mathcal{C}}S(\alpha)$, and then we can consider
the left semidirect product $\prod_{\alpha\in
\mathcal{C}}S(\alpha)\rtimes ({\rm Aut}_0(\mathcal{C})\ltimes G^q)$.

The right actions of $\prod_{\alpha\in \mathcal{C}}S(\alpha)$ and
$G^q$ on $G^n$ commute, i.e.
\begin{equation} \label{comp23}
((h_i)_{1\leq i\leq n}\leftarrow (\psi_\alpha)_{\alpha \in
\mathcal{C}})\leftarrow (\sigma_t)_{1\leq t\leq q}=((h_i)_{1\leq
i\leq n}\leftarrow (\sigma_t)_{1\leq t\leq q})\leftarrow
(\psi_\alpha)_{\alpha \in \mathcal{C}} \end{equation} Indeed, it
is easily checked that on the $i$th position of each side one
finds the element $h_{\psi_{\alpha}(i)}\sigma_p$, where
$\alpha=\hat{i}\in \mathcal{C}^p$. Now (\ref{comp13}) and
(\ref{comp23}) show that the compatibility relation
(\ref{comprightactionset2}) is satisfied for the right actions of
${\rm Aut}_0(\mathcal{C})\ltimes G^q$ and $\prod_{\alpha\in
\mathcal{C}}S(\alpha)$ on $G^n$. In conclusion, the group
$\prod_{\alpha\in \mathcal{C}}S(\alpha)\rtimes
({\rm Aut}_0(\mathcal{C})\ltimes G^q)$ acts to the right on the set
$G^n$ by
$$(h_i)_{1\leq i\leq n}\leftarrow ((\psi_\alpha)_{\alpha \in
\mathcal{C}}\rtimes (g\ltimes (\sigma_t)_{1\leq t\leq
q}))=(((h_i)_{1\leq i\leq n}\leftarrow (\psi_\alpha)_{\alpha \in
\mathcal{C}})\leftarrow g)\leftarrow (\sigma_t)_{1\leq t\leq q}.$$

Now we can prove the following.

\begin{theorem}
The isomorphism types of $G$-gradings of the type $
{\rm END}(\mathcal{F})$, where $\mathcal{F}$ is a $G$-graded
$\rho$-flag, are classified by the orbits of the right action of
the group $\prod_{\alpha\in \mathcal{C}}S(\alpha)\rtimes
({\rm Aut}_0(\mathcal{C})\ltimes G^q)$  on the set $G^n$.
\end{theorem}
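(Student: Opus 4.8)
The plan is to set up a bijection between $G$-graded $\rho$-flags (up to a harmless normalization) and the set $G^n$, and then show that the orbit relation on $G^n$ under the stated group action corresponds exactly to the graded-isomorphism relation on the associated $\END$-algebras, via Theorem~\ref{teoremaisograd}. The correspondence is the obvious one: a $G$-graded $\rho$-flag $\mathcal{F}$ comes with a homogeneous basis $B=(v_i)_{1\le i\le n}$ adapted to the partition $B=\bigcup_{\alpha\in\mathcal{C}}B_\alpha$, and we record the tuple $({\rm deg}\,v_1,\ldots,{\rm deg}\,v_n)\in G^n$. Conversely, any tuple $(h_i)_i\in G^n$ produces a $G$-graded $\rho$-flag on the space $V=k^n$ by declaring $v_i$ to be homogeneous of degree $h_i$ and setting $V_\alpha=\operatorname{span}\{v_i\mid \hat{i}\le\alpha\}$. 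By Proposition~\ref{isoEndMat} and Proposition~\ref{defEND}, this $\mathcal{F}$ has a well-defined graded algebra $\END(\mathcal{F})$, so every isomorphism type of the form $\END(\mathcal{F})$ occurs from some tuple; this gives surjectivity onto the isomorphism types.

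Next I would unwind what the three generating group actions do to a tuple $(h_i)_i$ in terms of operations on flags. The Young subgroup $\prod_\alpha S(\alpha)$ acts by permuting, within each equivalence class $\alpha$, the chosen basis vectors (equivalently, re-indexing $B_\alpha$): this does not change the flag $\mathcal{F}$ at all, only the labelling of its basis, hence does not change $\END(\mathcal{F})$. The subgroup $G^q=\prod_{t}G$ acts by $h_i\mapsto h_i\sigma_t$ for $i$ in the $t$th connected component; on the flag side this is precisely applying the right suspension $\mathcal{F}^t\rightsquigarrow\mathcal{F}^t(\sigma_t)$ componentwise, and as noted in the proof of Theorem~\ref{teoremaisograd}(3)$\Rightarrow$(1), $\END(\mathcal{F}^t(\sigma_t))=\END(\mathcal{F}^t)$, so again the graded algebra is unchanged. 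Finally $\operatorname{Aut}_0(\mathcal{C})$ acts through $g\mapsto\tilde g$ by re-indexing $(h_i)\mapsto(h_{\tilde g(i)})$; on the flag side this realizes a $g$-isomorphism of flags (sending $V_\alpha$ to $V_{g(\alpha)}$ after matching up bases), and by Lemma~\ref{lemaisoEND} a $g$-isomorphic flag has an isomorphic graded endomorphism algebra. So each generator of the acting group preserves the isomorphism type of $\END(\mathcal{F})$; hence the map (orbits of $G^n$) $\to$ (isomorphism types) is well-defined.

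The heart of the argument is injectivity of this map, and this is where Theorem~\ref{teoremaisograd} does the work. Suppose $(h_i)_i$ and $(h'_i)_i$ give flags $\mathcal{F}$, $\mathcal{F}'$ with $\END(\mathcal{F})\simeq\END(\mathcal{F}')$ as $G$-graded algebras. By the equivalence (1)$\Leftrightarrow$(2) (or (3)) of Theorem~\ref{teoremaisograd}, there is a permutation $\tau\in S_q$ of connected components, an isomorphism $g_t:\mathcal{C}^t\to\mathcal{C}^{\tau(t)}$ in $\operatorname{Aut}_0$ for each $t$, and elements $\sigma_1,\ldots,\sigma_q\in G$ so that $\mathcal{F}^t(\sigma_t)$ is $g_t$-isomorphic to $\mathcal{F}'^{\tau(t)}$. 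Assembling the $g_t$ into a single $g\in\operatorname{Aut}_0(\mathcal{C})$ inducing $\tau$, a $g$-isomorphism of flags carries the degree tuple of $\mathcal{F}$, after the index permutation $\tilde g$ and the component-wise shifts $\sigma_t$, to the degree tuple of $\mathcal{F}'$, \emph{up to} the freedom in the choice of the homogeneous basis of each $\mathcal{F}'^{\tau(t)}$ within each equivalence class — and that freedom is exactly an element of $\prod_\alpha S(\alpha)$. Translating this chain of operations into $G^n$ and using the compatibility relations (\ref{comp12}), (\ref{comp13}), (\ref{comp23}) already established to reassemble them as a single element of $\prod_\alpha S(\alpha)\rtimes(\operatorname{Aut}_0(\mathcal{C})\ltimes G^q)$, we conclude $(h_i)_i$ and $(h'_i)_i$ lie in one orbit. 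The main obstacle, and the step needing genuine care rather than formula-pushing, is matching the left/right action conventions and the inverses: the $g$-isomorphism of flags naturally produces the \emph{inverse} re-indexing to what one first writes down, and one must verify that the composite really lands in the specified semidirect product with the specified action, rather than an \emph{a priori} different grouping; this is precisely what the three compatibility identities were set up to guarantee, so the proof will consist of invoking them in the right order.
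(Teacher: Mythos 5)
Your proposal is correct and follows essentially the same route as the paper: parametrize graded flag structures by degree tuples in $G^n$, invoke Theorem \ref{teoremaisograd} to convert graded isomorphism of the ${\rm END}$-algebras into componentwise shifted $g$-isomorphisms, and translate that into class-by-class permutation of degree tuples, reassembled via the compatibility relations (\ref{comp12}), (\ref{comp13}), (\ref{comp23}) into membership in a single orbit of $\prod_{\alpha\in\mathcal{C}}S(\alpha)\rtimes({\rm Aut}_0(\mathcal{C})\ltimes G^q)$. The only difference is presentational (you split well-definedness and injectivity, the paper proves the equivalence in one pass), and your glossing of the basis-freedom step matches the paper's own level of detail.
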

\begin{proof}
We first need some simple remarks. If $V$ and $V'$ are $G$-graded
vector spaces with homogeneous bases $\{ v_1,\ldots,v_n\}$,
respectively $\{ v'_1,\ldots,v'_n\}$, then $V$ and $V'$ are
isomorphic as $G$-graded vector spaces if and only if $\text{dim }\,
V_g=\text{ dim }\, V'_g$ for any $g\in G$, and this is also
equivalent to the fact that the $n$-tuple $({\rm deg}\, v_1,\ldots
,{\rm deg}\, v_n)$ of elements of $G$ is obtained from $({\rm
deg}\, v'_1,\ldots ,{\rm deg}\, v'_n)$ by a permutation.

Also, if $\sigma\in G$, then $V(\sigma)\simeq V'$ if and only if
$(({\rm deg}\, v_1)\sigma^{-1},\ldots ,({\rm deg}\,
v_n)\sigma^{-1})$ is obtained from $({\rm deg}\, v'_1,\ldots ,{\rm
deg}\, v'_n)$ by a permutation; this follows from the fact that a
homogeneous element of degree $g$ of $V$ has degree $g\sigma^{-1}$
in $V(\sigma)$.

Let $\mathcal{F}$ be a $\rho$-flag with basis $B=\{ v_1,\ldots
,v_n\}$ as in Definition \ref{de}. A $G$-graded structure on
$\mathcal{F}$ is given by assigning arbitrary degrees $h_1,\ldots
,h_n\in G$ to $v_1,\ldots ,v_n$. Thus $G$-graded structures on
$\mathcal{F}$ are given by elements $(h_1,\ldots ,h_n)$ of $G^n$.

Let $\mathcal{F}$ and $\mathcal{F}'$ be $G$-graded $\rho$-flags
given by $n$-tuples $(h_1,\ldots ,h_n)$ and $(h'_1,\ldots ,h'_n)$
as above. The homogeneous basis of $V$ and $V'$ are denoted by
$B=\displaystyle \bigcup_{\alpha \in \mathcal{C}} B_{\alpha}$ and
$B'=\displaystyle \bigcup_{\alpha \in \mathcal{C}} B'_{\alpha}$.

We show that ${\rm END}(\mathcal{F})\simeq {\rm END}(\mathcal{F}')$ as
$G$-graded algebras if and only if $(h_1,\ldots ,h_n)$ and
$(h'_1,\ldots ,h'_n)$ are in the same orbit of $G^n$ with respect
to the right action of $\prod_{\alpha\in
\mathcal{C}}S(\alpha)\rtimes ({\rm Aut}_0(\mathcal{C})\ltimes G^q)$, and
this will finish the proof.

By Theorem \ref{teoremaisograd}, ${\rm END}(\mathcal{F})\simeq
{\rm END}(\mathcal{F}')$ if and only if there exists $g\in
{\rm Aut}_0(\mathcal{C})$ such that $\mathcal{F}^t(\sigma_t)$ is
$g_t$-isomorphic as a graded flag to
$\mathcal{F}'^{\overline{g}(t)}$ for any $1\leq t\leq q$, where
$\overline{g} \in S_q$ is the permutation such that for any $t$,
$g(\mathcal{C}^t)=\mathcal{C}^{\overline{g}(t)}$, and
$g_t:\mathcal{C}^t\ra \mathcal{C}^{\overline{g}(t)}$ is the
isomorphism of posets induced by $g$ via restriction and
corestriction. Using the considerations above, we obtain that
${\rm END}(\mathcal{F})\simeq {\rm END}(\mathcal{F}')$ if and only if there
exists $g\in {\rm Aut}_0(\mathcal{C})$ such that for each $1\leq t\leq
q$ and any $\alpha \in \mathcal{C}^t$, the degrees of the elements
of $B_\alpha$ multiplied to the right by $\sigma_t^{-1}$ are
obtained by a permutation from the elements of $B'_{g(\alpha)}$.
But this is equivalent to
$$((h_i)_{1\leq i\leq n}\leftarrow (\sigma_t^{-1})_{1\leq t\leq
q})\leftarrow (\psi_\alpha)_{\alpha \in \mathcal{C}}=(h'_i)_{1\leq
i\leq n}\leftarrow g$$ for some $(\psi_\alpha)_{\alpha \in
\mathcal{C}}\in \prod_{\alpha\in \mathcal{C}}S(\alpha)$. Since the
right actions of $G^q$ and $\prod_{\alpha\in
\mathcal{C}}S(\alpha)$ on $G^n$ commute, this is the same with
$$(h_i)_{1\leq i\leq n}= (((h'_i)_{1\leq
i\leq n}\leftarrow g)\leftarrow (\sigma_t)_{1\leq t\leq
q})\leftarrow (\psi_\alpha^{-1})_{\alpha \in \mathcal{C}}$$ which
is the same to $(h_i)_{1\leq i\leq n}$ and $(h'_i)_{1\leq i\leq
n}$ lying in the same orbit of the right action of
$\prod_{\alpha\in \mathcal{C}}S(\alpha)\rtimes
({\rm Aut}_0(\mathcal{C})\ltimes G^q)$  on $G^n$.
\end{proof}

\begin{example}
As particular cases of our results we obtain the following.\\
(1) Let $A=M_n(k)$, the full matrix algebra. If $G$ is a group,
then the good $G$-gradings on $A$ are all isomorphic to gradings
of the form ${\rm END}(V)$, where $V$ is a $G$-graded vector space
of dimension $n$. These gradings are classified by the orbits of
the biaction of the groups $S_n$ (by usual permutations of the
elements) and $G$ (by right translations) on $G^n$. Indeed, in
this case $\mathcal C$ is a singleton, so obviously ${\rm
{\rm Aut}}_0({\mathcal C})$ is trivial. This result appears in
\cite{cdn}.\\
(2) Let $A$ be the algebra of upper block triangular matrices of
type $m_1,\ldots ,m_r$, where $n=m_1+\ldots +m_r$. Then $\mathcal
C$ is isomorphic to the poset $\{1,\ldots ,r\}$ with the usual
order, so any good grading on $A$ is of the type $ {\rm
END}({\mathcal F})$, where $\mathcal F$ is a graded (usual) flag
of signature $(m_1,\ldots ,m_r)$. Again, ${\rm Aut}_0({\mathcal
C})$ is trivial and $\mathcal C$ is connected, so the isomorphism
types of good $G$-gradings on $A$ are classified by the orbits of
the biaction of a Young subgroup $S_{m_1}\times \ldots \times
S_{m_r}$ (by permutations) and $G$ (by translations) on $G^n$.
This result appears in \cite{bd}.
\end{example}

\end{document}